\newtheorem{theorem}{Theorem}[section]
\newtheorem{proposition}[theorem]{Proposition}
\newtheorem{lemma}[theorem]{Lemma}
\newtheorem{definition}[theorem]{Definition}
\theoremstyle{definition}
\newtheorem{example}[theorem]{Example}
\theoremstyle{remark}
\newtheorem{remark}[theorem]{Remark}
\newcommand{\Z}{\mathbb{Z}}
\newcommand{\Q}{\mathbb{Q}}
\newcommand{\C}{\mathbb{C}}
\newcommand{\PP}{\mathbb{P}}
\newcommand{\F}{\mathbb{F}}
\newcommand{\spec}{\operatorname{Spec}}
\newcommand{\Prim}{\mathrm{Prim}}
\newcommand{\dR}{\mathrm{dR}}
\newcommand{\rig}{\mathrm{rig}}
\title{Picard-Fuchs equations of the generalized Dwork family}
\author{Ryo Negishi\footnote{Department of Mathematics, Graduate School of Science, Hokkaido University, Sapporo 060-0810, JAPAN E-mail: negishi.ryo.r2@elms.hokudai.ac.jp}}
\date{}
\numberwithin{equation}{section}
\begin{document}

\maketitle

  \begin{abstract}
  We determine the Picard-Fuchs equations of the generalized Dwork families by Katz.
As an application, we compute the Frobenius matrix on the rigid cohomology of the family.
This was originally done by Kloosterman, while
we give an alternative computation with use of the Picard-Fuchs equations.
\end{abstract}

\section{Introduction}
The Dwork family is the one parameter family of hypersurfaces of the projective space $\PP^{n-1}$ given by a homogeneous equation
\begin{equation}
  \label{eq:dwork}
  X_1^n+ \cdots + X_n^n - n \lambda X_1 \cdots X_n = 0
\end{equation}
where $\lambda$ is the parameter. The name ``Dwork family''  comes from 
the fact that Bernard Dwork preferred to use this family in his study
on the Frobenius actions of algebraic families over a finite field
(cf. \cite{katz}).

In his paper \cite{katz}, Katz introduces a generalization 
of the Dwork family
  \[\pi : \mathbb{X} \longrightarrow \mathbb{A}^1 - \mu_d\]
defined by a homogeneous equation
 \begin{equation}\label{intro-eq1}
   w_1X_1^d + \cdots + w_nX_n^d- d \lambda X_1^{w_1} \cdots X_n^{w_n}=0,
 \end{equation}
 where $w_1, \ldots, w_n$ and $d$ are positive integers such that
 $\sum_{i=1}^n w_i =d$ and $\gcd(w_1, \ldots, w_n) = 1$.
 The group $\Gamma_W := \{ (\zeta_1, \ldots, \zeta_n) \in \mu_d \mid \zeta_1^{w_1} \cdots \zeta_n^{w_n}=1\}$ acts on the equation \eqref{intro-eq1} in a natural way 
(see \S \ref{group-action-sect}),
and this action induces the eigendecomposition of 
the primitive part of the $\ell$-adic sheaf $R^{n-2}\pi_*\overline{\Q}_\ell$. Then he proves that 
each component is isomorphic to a hypergeometric sheaf.  

Katz also discusses the Picard-Fuchs equations 
on the de Rham cohomology group.
Thanks to the result on the $\ell$-adic sheaves, 
it follows that each eigencomponent of the
de Rham cohomology group is isomorphic to a hypergeometric 
$\mathcal D$-module. However, 
it is more delicate to determine the Picard-Fuchs equations.
Katz computes the Picard-Fuchs equation of
a holomorphic form which is invariant under the action by $\Gamma_W$
(\cite[\S 8]{katz}).
This is done by computing the period of integral along a certain homology cycle.
However, it seems difficult to extend this argument for other cohomology classes.
A general method to compute the Picard-Fuchs equation of a cohomology class is
the Griffiths-Dwork method \cite[\S 5.3]{CK}.
However, this usually works well only when the defining equation is provided individually
(namely the degree and exponents of the equation are particular numbers),
and this seems not useful in our setting.
To the best knowledge of the author, the Picard-Fuchs equations of
the family \eqref{intro-eq1} have not been determined completely.

The purpose of this paper is to determine the Picard-Fuchs equations of 
the family \eqref{intro-eq1} for every components.
The main result is 
Theorem \ref{theorem:picard-fuchs-descent} 
(or equivalently Theorem \ref{theorem:picard-fuchs-reduced}).
For the proof, we use the technique of \cite[Lemma 2]{gahrs}, while we employ Katz's
results in \cite{katz} in several places.

In \S 4,
we give an application to $p$-adic cohomology.
In his paper \cite{kloosterman},
Kloosterman computes
the matrix $A(\lambda)$ describing the Frobenius action
on the de Rham cohomology of certain families including Dwork family without computing Picard-Fuchs equations
(the method used in \cite{kloosterman}
is often called the deformation method, we refer the reader to \cite{gerkmann}
for an exposition in the term of the relative rigid cohomology). 
We give an alternative proof for computing $A(\lambda)$ with use of Picard-Fuchs equations. 

\bigskip

\noindent\textbf{Acknowledgment}. The author is grateful to Professor Masanori Asakura for helpful comments and suggestions.  The author also  would like to thank Professor Seidai Yasuda for his helpful comments on the proof of Theorem \ref{theorem:picard-fuchs}. This work was supported by JST SPRING, Grant Number JPMJSP2119. 

\section{Preliminaries}
In this section, we review the generalized Dwork family from \cite{katz}.

\subsection{Definition of the generalized Dwork family}\label{defn-sect}
 Fix integers $d \geq n \geq 3$ and a $n$-tuple of positive integers $W=(w_1, \ldots, w_n)$ such that
   \begin{equation}
     \label{eq:conditions}
     \sum_{i=1}^n w_i =d,\ \gcd(w_1, \ldots, w_n) = 1.
\end{equation}
 Put $d_W = lcm(w_1, \ldots, w_n)d$. Let $\Phi_{d_W}(T)$ be the $d_W$-th cyclotomic 
 polynomial (the minimal polynomial of primitive $d_W$-th root of unity over $\Q$),
  and put \[R_0 := \Z[1/d_W][T]/(\Phi_{d_W}(T)).\]
Let
$\pi : \mathbb{X} \to \mathbb{A}^1_{R_0}:=\spec R_0[\lambda]$
be the one parameter family
  defined by the homogeneous equation
\begin{equation}
  \label{eq:generalized}
  Q_\lambda :=  w_1X_1^d + \cdots + w_nX_n^d - d \lambda X_1^{w_1} \cdots X_n^{w_n}=0.
\end{equation}
We call $\pi$ the generalized Dwork family.
\begin{lemma}[{\cite[Lemma 2.1.]{katz}}]\label{lemma:smooth}
  The morphism $\pi$ is smooth on $U:= \spec R_0[\lambda, (1-\lambda^d)^{-1}]$.
\end{lemma}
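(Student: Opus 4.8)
The plan is to check smoothness fiberwise. First I would observe that $\pi$ is flat: the total space $\mathbb{X}$ is the relative hypersurface in $\PP^{n-1}$ over the base $B := \spec R_0[\lambda,(1-\lambda^d)^{-1}]$ cut out by $Q_\lambda$, and since the coefficient $w_1$ of the monomial $X_1^d$ is invertible in $R_0$, the polynomial $Q_\lambda$ restricts to a nonzero form on every fiber; hence $\mathbb{X}\to B$ is a relative effective Cartier divisor and therefore flat with fibers of pure dimension $n-2$. For a flat, finitely presented morphism, smoothness is equivalent to smoothness of all geometric fibers, so it suffices to fix an algebraically closed field $k$ and a value $\lambda_0\in k$ with $\lambda_0^d\neq 1$ and to show that the fiber $\{Q_{\lambda_0}=0\}\subset\PP^{n-1}_k$ is smooth.

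For the fiber I would invoke the projective Jacobian criterion. Because $Q_{\lambda_0}$ is homogeneous of degree $d$, Euler's relation gives $\sum_i X_i\,\partial Q_{\lambda_0}/\partial X_i = d\,Q_{\lambda_0}$, and since $d$ is invertible in $R_0$ (it divides $d_W$), any common zero of the partial derivatives already lies on the hypersurface. Thus the singular locus of the fiber is exactly the common zero locus in $\PP^{n-1}_k$ of the $n$ partials
\[
\frac{\partial Q_{\lambda_0}}{\partial X_i} = d\,w_i\,X_i^{w_i-1}\Bigl(X_i^{d-w_i} - \lambda_0\!\!\prod_{k\neq i} X_k^{w_k}\Bigr),
\]
where I use that $d$ and each $w_i$ are invertible, as they all divide $d_W=\operatorname{lcm}(w_1,\dots,w_n)d$. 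The goal is then to prove this common zero locus is empty when $\lambda_0^d\neq 1$.

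I would argue by a case split on the vanishing pattern of the coordinates of a hypothetical singular point $(X_1:\cdots:X_n)$. If all $X_i\neq 0$, then each factor $X_i^{w_i-1}$ is nonzero, so vanishing of the $i$-th partial forces $X_i^{d-w_i}=\lambda_0\prod_{k\neq i}X_k^{w_k}$; multiplying by $X_i^{w_i}$ gives $X_i^d=\lambda_0 m$ with $m:=\prod_k X_k^{w_k}\neq 0$. Raising the $i$-th relation to the power $w_i$ and taking the product over $i$ yields $m^d=(\lambda_0 m)^d=\lambda_0^d m^d$, whence $\lambda_0^d=1$, contradicting the hypothesis. If instead some coordinate $X_{j_0}$ vanishes while some $X_i\neq 0$, then vanishing of the $i$-th partial again gives $X_i^{d-w_i}=\lambda_0\prod_{k\neq i}X_k^{w_k}$; but the right-hand side contains the factor $X_{j_0}^{w_{j_0}}=0$, so $X_i^{d-w_i}=0$, and since $d-w_i\geq 2>0$ (as $d=\sum_k w_k$ and $n\geq 3$), this forces $X_i=0$, a contradiction. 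The remaining case, all coordinates zero, is excluded in $\PP^{n-1}$. Hence there is no singular point and the fiber is smooth.

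The computations here are routine; the points that require care are the invertibility of $d$ and of each $w_i$ — which is precisely why $d_W$ is inverted in $R_0$, and which legitimizes both Euler's relation and the divisions above — together with the handling of the mixed case, where the monomial $m$ collapses to zero and one must invoke $d-w_i>0$. I would also record the flatness step explicitly, since it is what upgrades ``all geometric fibers over $U$ are smooth'' to ``$\pi$ is a smooth morphism on $U$''.
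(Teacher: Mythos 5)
Your proof is correct: the flatness reduction to geometric fibers, the Euler-relation observation that $d$ invertible makes the singular locus equal to the common zero locus of the partials, and the two-case analysis (all coordinates nonzero forcing $\lambda_0^d=1$, a mixed vanishing pattern forcing $X_i^{d-w_i}=0$ with $d-w_i=\sum_{k\neq i}w_k>0$) are all sound, and you correctly flag that invertibility of $d$ and each $w_i$ is exactly what inverting $d_W$ guarantees. The paper itself gives no proof, quoting the statement from Katz's Lemma 2.1, and your Jacobian-criterion argument is essentially the one given there.
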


Fix a prime number $\ell$ and an embedding $R_0$ into $\overline{\Q}_\ell$. We define 
the primitive part $\Prim^{n-2}_{\overline\Q_\ell}$ on $U[1/\ell]$ as follows.
When $n-2$ is odd, set $\Prim^{n-2}_{\overline\Q_\ell}=R^{n-2}\pi_*\overline{\Q}_\ell|_{U[1/\ell]}$.
When $n-2$ is even, we set
$\Prim^{n-2}_{\overline\Q_\ell}$ to be the subsheaf of $R^{n-2}\pi_*\overline{\Q}_\ell|_{U[1/\ell]}$ which vanishes by the cup product with  $(n-2)/2$-fold product of the hyperplane class.
We have a direct decomposition
\[R^{n-2}\pi_*\overline{\Q}_\ell|_{U[1/\ell]} = 
\Prim^{n-2}_{\overline\Q_\ell}\oplus \overline{\Q}_\ell(-(n-2)/2).\]

\subsection{Group action and eigendecomposition}\label{group-action-sect}
For a commutative ring $A$, let $\mu_d(A)$ denote the group of $d$-th root of unity in $A$.
Put $\Gamma=(\mu_d(R_0))^n$ and
\[\Gamma_W := \{ (\zeta_1, \ldots, \zeta_n) \in \Gamma \mid \zeta_1^{w_1} \cdots \zeta_n^{w_n}=1\}. \]
Let $\Delta$ be the diagonal set of $\Gamma_W$. Then the finite abelian group $\Gamma_W/\Delta$
acts on the generalized Dwork family as \[(X_1, \ldots, X_n, \lambda) \mapsto (\zeta_1X_1, \ldots, \zeta_nX_n, \lambda).\]
The character group $D\Gamma = \mathrm{Hom}_{\mathrm{Group}}(\Gamma, R_0^\times)$ is identified with the group $(\Z/d\Z)^n$ via the pairing
\begin{align*}
  \Gamma &\times (\Z/d\Z)^n \longrightarrow R_0^\times\\
  (\zeta_1,\ldots, \zeta_n) &\times (v_1, \ldots, v_n) \mapsto \zeta_1^{v_1}\cdots \zeta_n^{v_n}.
\end{align*}
The character group $D(\Gamma_{W}/\Delta)$ of $\Gamma_{W}/\Delta$ is identified with the quotient of \[(\Z/d\Z)^n_0:= \{V=(v_1, \ldots, v_n) \in (\Z/d\Z)^n \mid \sum_{i=1}^nv_i = 0\}\] by the subgroup $\langle W\rangle$ of  $(\Z/d\Z)^n_0$ generated by $W$, so that the pairing
\[
\Gamma_W/\Delta\times (\Z/d\Z)^n_0/\langle W\rangle\longrightarrow R_0^\times
\]
is perfect.
For a $R_0$-module $M$ on which a finite abelian group $G=\Gamma_{W}/\Delta$ acts, 
let 
\[M_\chi = \{m \in M \mid \forall g \in G, g\cdot m = \chi(g)m\},\]
so that we have the decomposition
\[M = \bigoplus_{\chi \in DG}M_\chi.\]
Let $\Prim^{n-2}(V\bmod W)_{\overline\Q_\ell}$ denote the eigensheaf 
of the primitive part $\Prim^{n-2}_{\overline\Q_\ell}$
for $V \in (\Z/d\Z)^n_0/\langle W\rangle$. 
We have the decomposition
\[\Prim^{n-2}_{\overline\Q_\ell} = \bigoplus_{V \in (\Z/d\Z)^n_0/\langle W\rangle} \Prim^{n-2}(V\bmod W)_{\overline\Q_\ell}.\]

\subsection{Review of Katz's results}
Here we review Katz's results for the generalized Dwork family in \cite{katz}, which we will use later. 

\medskip

  We say that 
  $V=(v_1, \ldots, v_n) \in (\Z/d\Z)^n_0$ is totally nonzero if and only if
  $v_i \neq 0  \pmod d$ for all $i$.
  For each $i$, let $\tilde{v}_i\in\{0,1,\ldots,d-1\}$ be the unique integer such that
  $\tilde{v}_i\equiv v_i\bmod d$. The degree of $V$ is defiend by
  \[\deg V := \frac{1}{d}\sum^n_{i=1}\tilde{v_i}.\]

\begin{lemma}[{\cite[Lemma 3.1.]{katz}}]\label{lemma:rank}
  \[\mathrm{rank}\ \Prim^{n-2}(V\bmod W)_{\overline\Q_\ell} = \#\{r \in \Z/d\Z \mid V+rW\  \text{is totally nonzero} \}.\]
\end{lemma}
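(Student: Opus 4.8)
The plan is to compute the rank of the lisse $\overline{\Q}_\ell$-sheaf $\Prim^{n-2}(V\bmod W)_{\overline{\Q}_\ell}$ by exploiting that, on the connected base $U[1/\ell]$, this rank is constant and may therefore be read off from a single well-chosen fiber. First I would specialize to $\lambda=0$, which lies in $U$ since $1-0^d=1$ is a unit. There the equation degenerates to $Q_0=\sum_i w_iX_i^d=0$, a smooth Fermat-type hypersurface. As $d_W$ is inverted in $R_0$, each $w_i$ is a unit admitting a $d$-th root after a finite extension, and the diagonal substitution $X_i\mapsto w_i^{-1/d}X_i$ identifies this fiber with the standard Fermat hypersurface $\{\sum_i X_i^d=0\}\subset\PP^{n-1}$. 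Being diagonal, this substitution commutes with the action $X_i\mapsto\zeta_iX_i$, so the $\Gamma_W/\Delta$-eigendecomposition is preserved under the identification.

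Next I would invoke the classical description of Fermat cohomology in the $\ell$-adic setting. At $\lambda=0$ the monomial term is absent, so the full group $\Gamma/\Delta=\mu_d^n/\Delta$ acts (its diagonal agrees with $\Delta$ since $\zeta^{\sum w_i}=\zeta^d=1$), and the primitive part decomposes as a $\Gamma/\Delta$-representation into one-dimensional eigenspaces indexed by the totally nonzero characters $A\in(\Z/d\Z)^n_0$. Restricting the action to the subgroup $\Gamma_W/\Delta\subset\Gamma/\Delta$ merges these: under the perfect pairing of \S\ref{group-action-sect} the inclusion of groups is dual to the quotient $(\Z/d\Z)^n_0\to(\Z/d\Z)^n_0/\langle W\rangle$, so the $\Gamma_W/\Delta$-eigenspace with character $V\bmod W$ is the direct sum of the Fermat eigenspaces over the coset $V+\langle W\rangle$. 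Using that $\sum_i(v_i+rw_i)=(\sum_i v_i)+r(\sum_i w_i)=0+rd\equiv0\pmod d$, every element of this coset already lies in $(\Z/d\Z)^n_0$, whence
\[
\mathrm{rank}\,\Prim^{n-2}(V\bmod W)_{\overline{\Q}_\ell}=\#\{A\in V+\langle W\rangle \mid A \text{ totally nonzero}\}.
\]

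Finally I would convert this coset count into the count over $r\in\Z/d\Z$ in the statement. The key arithmetic input is that $W$ has exact order $d$ in $(\Z/d\Z)^n$: for each prime $p\mid d$ the hypothesis $\gcd(w_1,\ldots,w_n)=1$ forces $p\nmid w_i$ for some $i$, so $\min_i v_p(w_i)=0$ and the $p$-part of $\mathrm{ord}(W)$ is the full $p$-part of $d$; multiplying over $p\mid d$ gives $\mathrm{ord}(W)=d$. Hence $r\mapsto V+rW$ is a bijection $\Z/d\Z\xrightarrow{\sim}V+\langle W\rangle$, and the displayed count equals $\#\{r\in\Z/d\Z\mid V+rW \text{ totally nonzero}\}$, as claimed.

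The step I expect to be the main obstacle is the cohomological input rather than the bookkeeping: one must pin down the classical eigenspace decomposition of the Fermat fiber with the correct normalization, so that the character attached to a Fermat eigenclass is exactly its index $A$ in the convention of the paper, and verify multiplicity one. Equally, one must check carefully that restriction of characters along $\Gamma_W/\Delta\hookrightarrow\Gamma/\Delta$ really is the quotient by $\langle W\rangle$ under the stated pairing. By comparison, the connectedness/specialization argument and the order-of-$W$ computation are routine.
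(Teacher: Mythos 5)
Your argument is correct, and it is essentially the standard one: the paper itself gives no proof of this lemma (it is quoted directly as Lemma~3.1 of Katz's \emph{Another look at the Dwork family}), and your reconstruction --- specialize the lisse sheaf to the Fermat fiber at $\lambda=0$, invoke the classical one-dimensional eigenspace decomposition of primitive Fermat cohomology indexed by totally nonzero elements of $(\Z/d\Z)^n_0$, identify restriction along $\Gamma_W/\Delta\hookrightarrow\Gamma/\Delta$ with the quotient by $\langle W\rangle$, and use $\mathrm{ord}(W)=d$ to convert the coset count into a count over $r\in\Z/d\Z$ --- is exactly Katz's route. The bookkeeping (the diagonal rescaling by $w_i^{-1/d}$, the inclusion $\Delta\subset\Gamma_W$, and the valuation argument for $\mathrm{ord}(W)=d$ from $\gcd(w_1,\ldots,w_n)=1$) all checks out.
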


We consider the generalized Dwork family over
$U-\{0\} = \mathbb{G}_{m} - \mu_d$, and 
write it by $\pi:\mathbb{X}\to \mathbb{G}_{m, R_0} - \mu_d$ for simplicity of notation.
Let \[[d] : \mathbb{G}_{m} - \mu_d 
\longrightarrow \mathbb{G}_{m} -\{1\} = \spec R_0[t, (t(1-t))^{-1}]\] be the map corresponding to the ring morphism
\begin{align*}
  R_0[t, (t(1-t))^{-1}] & \rightarrow R_0[\lambda, (\lambda(1-\lambda^d))^{-1}]\\
t &\mapsto \lambda^{-d}.
\end{align*}
 Then there exists a descent family $\pi_{\text{\rm desc}}:\mathbb{Y} \to \mathbb{G}_m-\{1\}$ with respect to $[d]$,

Indeed, take integers
$ b = (b_1, \ldots, b_n)$ such that $\sum_{i=1}^nb_iw_i = 1$, and 
we take new variables $Y_i := \lambda^{b_i}X_i$.
Then
the equation $(\ref{eq:generalized})$ becomes 
 \begin{equation}
   \label{eq:descent}
   Q_t := w_1t^{-b_1}Y_1^d + \cdots + w_nt^{-b_n}Y_n^d - dY_1^{w_1}\cdots Y_n^{w_n} = 0.
\end{equation}
This gives rise to the descent family.

The group $\Gamma_W/\Delta$ also acts on the descent family, so that we 
have the eigensheaves $\Prim^{n-2}_{\text{\rm desc}}(V \bmod W)_{\overline\Q_\ell}$ of
the primitive part of 
$R^{n-2}\pi_{\text{\rm desc},*}\overline{\Q}_\ell$.
\begin{lemma}[{\cite[Lemma 6.2, Lemma 6.3.]{katz}}]
  Suppose that $\Prim^{n-2}_{\text{\rm desc}}(V \bmod W)_{\overline\Q_\ell}$ 
  is nonzero. Then there exists a continuous character
   \[\Lambda_{V, W}: \pi_1(\spec R_0[1/\ell]) \rightarrow \overline{\Q}_\ell^\times\]
   and the hypergeometric sheaf $\mathcal{H}_{V, W}$ such that
   \[\Prim^{n-2}_{\text{\rm desc}}(V \bmod W)_{\overline\Q_\ell} \cong \mathcal{H}_{V, W} \otimes \Lambda_{V, W}.\]
 \end{lemma}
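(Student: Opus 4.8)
The plan is to identify $\Prim^{n-2}_{\text{\rm desc}}(V \bmod W)_{\overline{\Q}_\ell}$ with a hypergeometric sheaf by computing its local monodromy at the three punctures $0,1,\infty$ of $\mathbb{G}_m-\{1\}$ and then invoking the rigidity of hypergeometric sheaves. First I would verify that the eigensheaf, viewed on a geometric fiber, is lisse on $\mathbb{G}_m-\{1\}$ of the rank predicted by Lemma \ref{lemma:rank}. Since $[d]:\mathbb{G}_m-\mu_d\to\mathbb{G}_m-\{1\}$ is finite \'etale and the descent family pulls back to $\pi$ along it, pullback along $[d]$ preserves rank and respects the $\Gamma_W/\Delta$-eigendecomposition, so the rank of $\Prim^{n-2}_{\text{\rm desc}}(V\bmod W)_{\overline{\Q}_\ell}$ equals $\#\{r\in\Z/d\Z\mid V+rW\ \text{totally nonzero}\}$, which is exactly the rank of a hypergeometric sheaf of the expected type.

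The heart of the argument is the local analysis at each puncture. At $t=1$, which corresponds to the locus $\lambda^d=1$ where the fiber of $\pi$ acquires an ordinary double point, I would use Picard--Lefschetz theory: the vanishing cycle produces a local monodromy that is a pseudoreflection (unipotent when $n-2$ is odd, a genuine reflection when $n-2$ is even), so that the local monodromy minus the identity has rank one. This is precisely the local structure forced at the finite singularity of a hypergeometric sheaf. At $t=0$ and $t=\infty$ (corresponding to $\lambda=\infty$ and $\lambda=0$) the monodromy is tame, and I would read off its eigenvalues as explicit roots of unity determined combinatorially by $V$, $W$ and $d$ through the descent $t=\lambda^{-d}$ and the change of variables $Y_i=\lambda^{b_i}X_i$. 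A key consistency check is that the eigenvalue sets at $0$ and $\infty$ are disjoint; this is equivalent to the irreducibility of the resulting hypergeometric sheaf and is what lets one name a specific $\mathcal{H}_{V,W}$.

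Having matched local monodromies, I would invoke Katz's rigidity theory for hypergeometric sheaves (\cite{katz}): an irreducible hypergeometric sheaf is cohomologically rigid (its index of rigidity equals $2$), so any irreducible lisse sheaf on $\mathbb{G}_m-\{1\}$ over the geometric fiber with the prescribed local monodromy at $0,1,\infty$ is geometrically isomorphic to $\mathcal{H}_{V,W}$. Finally, the passage from a geometric to an arithmetic isomorphism produces the twist: two lisse sheaves on $\mathbb{G}_m-\{1\}$ over $R_0[1/\ell]$ that become isomorphic over the geometric fiber differ by a rank-one lisse sheaf pulled back from the base, i.e.\ by a continuous character $\Lambda_{V,W}:\pi_1(\spec R_0[1/\ell])\to\overline{\Q}_\ell^\times$, which yields the stated isomorphism $\Prim^{n-2}_{\text{\rm desc}}(V\bmod W)_{\overline{\Q}_\ell}\cong\mathcal{H}_{V,W}\otimes\Lambda_{V,W}$.

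The main obstacle is the local monodromy computation, and within it two points deserve the most care. First, establishing the pseudoreflection at $t=1$ requires checking that the degeneration is genuinely a single ordinary double point and tracking the parity-dependent Picard--Lefschetz formula. Second, computing the tame eigenvalues at $0$ and $\infty$ and verifying their disjointness requires unwinding the $\Gamma_W/\Delta$-action together with the descent by $[d]$ combinatorially in terms of $V$ and $W$. Pinning down the arithmetic character $\Lambda_{V,W}$, as opposed to the purely geometric identification, is the most delicate step, since it is invisible to the geometric monodromy and must be extracted from the action of $\pi_1(\spec R_0[1/\ell])$ on a chosen cohomology class.
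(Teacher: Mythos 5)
First, a point of comparison: the paper does not prove this lemma at all --- it is quoted verbatim from Katz \cite[Lemmas 6.2, 6.3]{katz} and used as a black box (its only role here is to feed Lemma \ref{lemma:irreducibility}). So there is no in-paper proof to measure your proposal against; what you have written is an attempt to reconstruct Katz's argument. Your overall architecture --- compute local monodromy at $0$, $1$, $\infty$, invoke rigidity of irreducible hypergeometric sheaves, and absorb the geometric-versus-arithmetic discrepancy into a character $\Lambda_{V,W}$ of $\pi_1(\spec R_0[1/\ell])$ --- is indeed the right general shape and is close in spirit to what Katz does (via his local recognition theorem for hypergeometric sheaves in \cite{esde}).

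There are, however, two genuine gaps. The first is at $t=1$: the singular fiber does \emph{not} have a single ordinary double point. A direct computation with $Q_\lambda$ shows that for $\lambda^d=1$ the singular locus consists of $d^{\,n-2}=\#(\Gamma_W/\Delta)$ ordinary double points forming a single orbit, in fact a torsor, under $\Gamma_W/\Delta$. This is not a technicality you can repair later: it is the mechanism that makes the strategy work for \emph{every} $V$. The span of the vanishing cycles is then (essentially) the regular representation of $\Gamma_W/\Delta$, so each eigencomponent $\Prim^{n-2}_{\text{\rm desc}}(V\bmod W)_{\overline\Q_\ell}$ receives exactly a rank-one drop, i.e.\ a pseudoreflection, at $t=1$. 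Under your stated plan (one vanishing cycle), all but one eigencomponent would have trivial local monodromy at $1$, the prescribed local data would not match any irreducible hypergeometric sheaf of rank $\geq 2$, and the rigidity step would collapse for every $V$ except one. The second gap is that rigidity (and Katz's recognition theorem) takes \emph{irreducibility} of the local system as a hypothesis, and you nowhere establish that $\Prim^{n-2}_{\text{\rm desc}}(V\bmod W)_{\overline\Q_\ell}$ is geometrically irreducible; note that in this paper irreducibility is a \emph{consequence} of the lemma (Lemma \ref{lemma:irreducibility}), so it cannot be assumed. The standard route is: purity gives geometric semisimplicity; a summand on which the pseudoreflection at $1$ acts trivially extends to a tame semisimple sheaf on $\mathbb{G}_m$, hence is a sum of Kummer sheaves, which is then excluded by the disjointness mod $\Z$ of the exponents at $0$ and $\infty$ --- the same disjointness you use only as a ``consistency check.'' Finally, tameness at $t=0$ (where the fiber degenerates to the non-reduced union of hyperplanes $\prod X_i^{w_i}=0$) is asserted but not justified; it is true but requires the specific Kummer-covering structure of the family.
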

 See \cite[8.17.6]{esde}, \cite[\S4, \S5, \S6]{katz} for 
 the definition of the hypergeometric sheaf $\mathcal{H}_{V, W}$. 
In this paper, we only use the following property.
Fix embeddings $R_0 \hookrightarrow \C \hookrightarrow \overline{\Q}_\ell$. Since $\mathcal{H}_{V, W}$ is constructed using the cancel operation \cite[9.3.1]{esde}, the analytification of
$\mathcal{H}_{V, W}$ on the complex analytic space 
$\mathbb{G}_{m, \C} - \{1\}$ is the local system 
corresponding to an irreducible hypergeometric $\mathcal{D}_{\mathbb{G}_{m, \C} - \{1\}}$-module \cite[8.17.11]{esde}.

\begin{lemma}\label{lemma:irreducibility}
   Let $\Prim^{n-2}_{\text{\rm desc}}(V \bmod W)_{\overline\Q}$ denote the eigencomponent
   of the primitive part of the local system $R^{n-2}\pi_{\text{\rm desc},*}\overline{\Q}$
   on 
   the complex analytic space 
$\mathbb{G}_{m, \C} - \{1\}$.
Then it is the local system corresponding to 
an irreducible hypergeometric $\mathcal{D}$-module.
In particular, $\Prim^{n-2}_{\text{\rm desc}}(V \bmod W)_{\overline\Q}$ is irreducible.
 \end{lemma}
 
\section{Picard-Fuchs equations of the generalized Dwork family}
\subsection{Jacobian rings of hypersurfaces}
Let $K$ be a field of characteristic zero.
Let $S$ be a smooth  affine variety over $K$ and let $f : X \rightarrow S$ be a smooth 
$K$-morphism.
Then
$R^if_*\Omega_{X/S}^\bullet$ is endowed a $\mathcal{D}_S$-module structure by the Gauss-Manin connection \cite{katzoda}
\[\nabla^\mathrm{GM} : R^if_*\Omega_{X/S}^\bullet \rightarrow R^if_*\Omega_{X/S}^\bullet \otimes \Omega_S^1.\]
For $\omega \in H^i_{\dR}(X/S)$, a differential operator $P \in \mathcal{D}_S$ which satisfies
\[ P\omega = 0 \]
is called a Picard-Fuchs equation of $\omega$.

\medskip  Let $Y$ be a nonsingular hypersurface of the complex projective space $\mathbb{P}^{n-1}_K$ given by a homogeneous polynomial $Q \in S:=K[X_1, \ldots, X_n]$ of degree $d$.
  The graded ring \[R_Q := S/J_Q,\quad J_Q:=(\frac{\partial Q}{\partial X_1}, \ldots, \frac{\partial Q}{\partial X_n})\] is called the Jacobian ring. We denote the homogeneous part of $R_Q$ of degree $N$ by $R^N _Q$.

\begin{theorem}[Griffiths, \cite{gri}]  
Put $\Omega = \sum_{i=1}^n(-1)^{i+1}X_idX_1 \wedge \cdots \wedge \widehat{dX_i} \wedge \cdots \wedge dX_n$.
Then
the map
  \begin{align*}
    S^{pd-n} &\longrightarrow H^{n-1}_\dR(\PP^{n-1} \backslash Y)\\
    A &\longmapsto \frac{A}{Q^p}\Omega
  \end{align*}
  induces an isomorphism
\[  
 R^{pd-n} _Q \overset{\cong}{\longrightarrow} 
 \mathrm{Gr}_F^{n-p}H^{n-1}_\dR(\PP^{n-1} \backslash Y)
 \]
 where $F^\bullet H^*_\dR(\PP^{n-1} \backslash Y)$ denotes the Hodge filtration.
  Furthermore, we have
  \begin{equation}
    \label{eq:griffiths}
    \frac{pA\frac{\partial Q}{\partial X_i}}{Q^{p+1}}\Omega =  \frac{\frac{\partial A}{\partial X_i}}{Q^{p}}\Omega
  \end{equation}
  as cohomology classes.
  \end{theorem}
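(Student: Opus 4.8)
The plan is to prove this by Griffiths' residue method, in four stages: represent every class in $H^{n-1}_\dR(\PP^{n-1}\setminus Y)$ by a rational form $\frac{A}{Q^p}\Omega$, establish the pole-order reduction formula \eqref{eq:griffiths} by exhibiting an explicit primitive, identify the resulting pole-order filtration with the Hodge filtration, and finish with a dimension count against the primitive Hodge numbers. For the first stage I would use that $\PP^{n-1}\setminus Y$ is a smooth affine variety of dimension $n-1$, so that by Grothendieck's algebraic de Rham theorem its cohomology is computed by global algebraic forms and is interesting only in degree $n-1$. A global rational $(n-1)$-form regular away from $Y$ must be homogeneous of weight $0$; since $\Omega$ has weight $n$ and $Q$ has weight $d$, every such form is of the shape $\frac{A}{Q^p}\Omega$ with $\deg A=pd-n$, and surjectivity of $A\mapsto\frac{A}{Q^p}\Omega$ follows from the pole-order (Gysin) description of forms on the complement of the ample divisor $Y$ together with the vanishing of the relevant cohomology of $\PP^{n-1}$.

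To prove \eqref{eq:griffiths} I would write down a primitive directly. Set $\Theta=dX_1\wedge\cdots\wedge dX_n$ and let $E=\sum_k X_k\,\partial_{X_k}$ be the Euler field, so that $\Omega=\iota_E\Theta$ and $d\Omega=n\Theta$. For a homogeneous $A$ with $\deg A=pd-n+1$ (the weight that makes all the forms below projective), put $\psi=\frac{A}{Q^p}\,\iota_{\partial_{X_i}}\Omega$. Using Cartan's formula, the relation $[\partial_{X_i},E]=\partial_{X_i}$, and the pointwise identity $dX_j\wedge\iota_{\partial_{X_i}}\Omega=\delta_{ij}\Omega-X_j\,\iota_{\partial_{X_i}}\Theta$, one computes
\[
d\psi=\frac{\partial_{X_i}A}{Q^p}\Omega-\frac{pA\,\partial_{X_i}Q}{Q^{p+1}}\Omega+\bigl(pd-\deg A+1-n\bigr)\frac{A}{Q^p}\,\iota_{\partial_{X_i}}\Theta,
\]
where Euler's identity has been used to collapse the $\iota_{\partial_{X_i}}\Theta$-contributions coming from $dA$ and $dQ$. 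The homogeneity $\deg A=pd-n+1$ makes the last coefficient vanish, so $\frac{pA\,\partial_{X_i}Q}{Q^{p+1}}\Omega-\frac{\partial_{X_i}A}{Q^p}\Omega=-d\psi$ is exact, which is exactly \eqref{eq:griffiths}. This is the mechanical heart of the argument, but it is routine once the correct $(n-2)$-form $\psi$ is chosen.

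Formula \eqref{eq:griffiths} then shows that a numerator in the Jacobian ideal can be traded for lower pole order: if $A=\sum_i B_i\,\partial_{X_i}Q\in J_Q$, then $\frac{A}{Q^p}\Omega\equiv\frac1{p-1}\sum_i\frac{\partial_{X_i}B_i}{Q^{p-1}}\Omega$ modulo exact forms. Hence $A\mapsto\frac{A}{Q^p}\Omega$ annihilates $J_Q^{pd-n}$ up to strictly smaller pole order, the pole-order filtration $P_\bullet$ on $H^{n-1}_\dR(\PP^{n-1}\setminus Y)$ is well defined, and there is an induced surjection $R_Q^{pd-n}\twoheadrightarrow\mathrm{Gr}^P_p H^{n-1}_\dR(\PP^{n-1}\setminus Y)$. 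To match $P_\bullet$ with the Hodge filtration I would invoke the residue isomorphism onto the primitive cohomology $\Prim^{n-2}_\dR(Y)$, which is (up to a Tate twist) a morphism of Hodge structures strictly compatible with the filtrations; the elementary direction $P_p\subseteq F^{n-p}$ (pole order controls Hodge level) then lets the surjection factor through $\mathrm{Gr}_F^{n-p}H^{n-1}_\dR(\PP^{n-1}\setminus Y)$, and strictness will upgrade this to the equality $P_p=F^{n-p}$ once the dimensions are known.

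The main obstacle is the final bijectivity, which I expect to reduce entirely to a numerical identity. Granting the partial filtration statement, the isomorphism follows from $\dim_K R_Q^{pd-n}=\dim_K\mathrm{Gr}_F^{n-p}H^{n-1}_\dR(\PP^{n-1}\setminus Y)$ for every $p$. Because $Y$ is smooth, the partials $\partial_{X_1}Q,\ldots,\partial_{X_n}Q$ form a regular sequence, so $R_Q$ is a graded Artinian Gorenstein algebra with socle in degree $n(d-2)$; its symmetric Hilbert function reproduces precisely the primitive Hodge numbers of $Y$ (for instance $\dim_K R_Q^{d-n}=\binom{d-1}{n-1}=p_g$ in the case $p=1$). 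Once the graded dimensions agree, the surjection $R_Q^{pd-n}\twoheadrightarrow\mathrm{Gr}_F^{n-p}H^{n-1}_\dR(\PP^{n-1}\setminus Y)$ is forced to be an isomorphism, which completes the proof.
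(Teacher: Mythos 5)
The paper does not prove this statement: it is quoted as Griffiths' classical theorem from [Gri69] with no in-paper argument, so there is nothing to compare your proposal against except the literature. Judged on its own, your sketch is the standard proof and its computational core checks out. With $\psi=\frac{A}{Q^p}\iota_{\partial_{X_i}}\Omega$ and $\deg A=pd-n+1$, your contraction identity $dX_j\wedge\iota_{\partial_{X_i}}\Omega=\delta_{ij}\Omega-X_j\,\iota_{\partial_{X_i}}\Theta$, the relation $d(\iota_{\partial_{X_i}}\Omega)=(1-n)\iota_{\partial_{X_i}}\Theta$, and the Euler collapse do produce the stated $d\psi$ with coefficient $pd-\deg A+1-n$ on the $\iota_{\partial_{X_i}}\Theta$ term, which vanishes for the correct weight; this gives \eqref{eq:griffiths} and, via $A=\sum_iB_i\partial_{X_i}Q$, the well-definedness of $R_Q^{pd-n}\to\mathrm{Gr}_F^{n-p}$. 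The surjectivity onto $H^{n-1}_\dR(\PP^{n-1}\setminus Y)$ via Grothendieck's theorem and the weight count is also fine.

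Two places where your sketch defers the actual content to asserted facts deserve flagging. First, the inclusion $P_p\subseteq F^{n-p}$ is not a formality: the direction that comes for free from Deligne's log-complex description is $F^{n-p}\subseteq P_p$, while $P_p\subseteq F^{n-p}$ is where the Hodge theory enters and is proved (in Griffiths, or via Deligne) by an inductive local computation of the residue of a pole-order-$p$ form showing it lands in $F^{n-1-p}H^{n-2}(Y)$; invoking ``strictness of the residue map'' presupposes you already know where these classes sit, so as written this step is circular rather than elementary. Second, the concluding identity $\dim_KR_Q^{pd-n}=\dim_K\mathrm{Gr}_F^{n-p}H^{n-1}_\dR(\PP^{n-1}\setminus Y)$ is precisely the numerical miracle the theorem encodes; to use it you must compute both sides independently, the left from the Hilbert series $\bigl((1-t^{d-1})/(1-t)\bigr)^n$ of the Artinian complete intersection and the right from Hirzebruch's formula for the primitive Hodge numbers of a smooth degree-$d$ hypersurface, and citing ``the Hilbert function reproduces the Hodge numbers'' assumes what is to be shown. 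Both gaps are filled in [Gri69] and neither invalidates your strategy, but they are the substance of the theorem rather than routine verifications.
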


  \begin{theorem}
    The residue map induces an isomorphism onto the primitive part 
\[ \mathrm{Res} : \mathrm{Gr}_F^{n-p}H^{n-1}(\PP^{n-1} \backslash Y) 
\overset{\cong}{\longrightarrow} H^{n-1-p, p-1}(Y)_{\mathrm{prim}}.\]
\end{theorem}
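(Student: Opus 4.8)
The plan is to realize the residue map as the connecting homomorphism of the Gysin (residue) long exact sequence attached to the smooth pair $Y\subset\PP^{n-1}$ with affine complement $U:=\PP^{n-1}\backslash Y$, and to read off its kernel and image from that sequence. Writing $i:Y\hookrightarrow\PP^{n-1}$ and $j:U\hookrightarrow\PP^{n-1}$ for the inclusions, the sequence reads
\[
\cdots\to H^{n-3}(Y)(-1)\xrightarrow{\,i_\ast\,}H^{n-1}(\PP^{n-1})\xrightarrow{\,j^\ast\,}H^{n-1}(U)\xrightarrow{\,\mathrm{Res}\,}H^{n-2}(Y)(-1)\xrightarrow{\,i_\ast\,}H^{n}(\PP^{n-1})\to\cdots,
\]
and it is an exact sequence of mixed Hodge structures in which $\mathrm{Res}$, after the indicated Tate twist, is a morphism of mixed Hodge structures; concretely it lowers the pole order, so that $\mathrm{Res}(F^{n-p}H^{n-1}(U))\subseteq F^{n-1-p}H^{n-2}(Y)$. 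Since a morphism of mixed Hodge structures is strict for the Hodge filtration, the sequence stays exact after applying $\mathrm{Gr}_F^{\bullet}$, and it therefore suffices to prove that $\mathrm{Res}$ is injective with image the primitive part $H^{n-2}(Y)_{\mathrm{prim}}(-1)$ before passing to graded pieces.

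First I would prove injectivity, i.e.\ that $j^\ast=0$ on $H^{n-1}(\PP^{n-1})$. If $n$ is even this is immediate, since $H^{n-1}(\PP^{n-1})=0$. If $n$ is odd, weak Lefschetz identifies $H^{n-3}(Y)$ with $H^{n-3}(\PP^{n-1})=K\cdot h_Y^{(n-3)/2}$, and the projection formula gives $i_\ast\big(h_Y^{(n-3)/2}\big)=h^{(n-3)/2}\cup i_\ast(1)=d\,h^{(n-1)/2}$, a generator of $H^{n-1}(\PP^{n-1})$; hence $i_\ast$ is surjective and, by exactness, $j^\ast=0$. In either case $\ker\mathrm{Res}=\mathrm{im}\,j^\ast=0$.

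Next I would identify the image. By exactness $\mathrm{im}\,\mathrm{Res}=\ker\big(i_\ast:H^{n-2}(Y)(-1)\to H^{n}(\PP^{n-1})\big)$, so it remains to identify this kernel with $H^{n-2}(Y)_{\mathrm{prim}}=\ker\big(L:H^{n-2}(Y)\to H^n(Y)\big)$, where $L$ denotes cup product with the hyperplane class $h_Y$. The projection formula gives $i^\ast i_\ast=\cup\,i^\ast[Y]=d\,L$ (as $[Y]=d\,h$), whence $\ker i_\ast\subseteq\ker L$. For the reverse inclusion: when $n$ is odd, $H^n(\PP^{n-1})=0$ and the inclusion is trivial; when $n$ is even, $H^n(\PP^{n-1})$ is generated by $h^{n/2}$ and $i^\ast$ is injective there because $i^\ast h^{n/2}=h_Y^{n/2}\neq0$, so $L\alpha=0$ forces $i_\ast\alpha\in\ker i^\ast=0$. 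Hence $\ker i_\ast=\ker L=H^{n-2}(Y)_{\mathrm{prim}}$. Taking $\mathrm{Gr}_F^{n-p}$ and using the filtration shift then yields the isomorphism $\mathrm{Res}:\mathrm{Gr}_F^{n-p}H^{n-1}(\PP^{n-1}\backslash Y)\xrightarrow{\cong}H^{n-1-p,p-1}(Y)_{\mathrm{prim}}$.

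The main obstacle is the Hodge-theoretic input rather than the Lefschetz bookkeeping: one must know that the residue sequence of the open variety $U$ is an exact sequence of mixed Hodge structures, that the residue shifts the Hodge filtration by one in the stated way, and that morphisms of mixed Hodge structures are strict, so that exactness is preserved under $\mathrm{Gr}_F^{\bullet}$. These facts rest on Deligne's theory together with Griffiths' computation of the pole-order (Hodge) filtration on $H^{n-1}(U)$ recorded in the previous theorem; granting them, surjectivity onto the primitive part could alternatively be pinned down by matching $\dim_K R_Q^{pd-n}$ with the primitive Hodge number $h^{n-1-p,\,p-1}_{\mathrm{prim}}(Y)$ once injectivity is in hand.
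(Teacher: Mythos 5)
The paper does not prove this statement: it is quoted as a classical theorem of Griffiths--Deligne, with the reader pointed to \cite[\S 5.3]{CK} for the method, so there is no in-paper argument to compare yours against. Your proof is the standard one and it is correct: the Gysin sequence of the smooth pair $Y\subset\PP^{n-1}$ exhibits $\mathrm{Res}$ as a morphism of mixed Hodge structures $H^{n-1}(\PP^{n-1}\backslash Y)\to H^{n-2}(Y)(-1)$; the Lefschetz bookkeeping (vanishing of $j^\ast$ in degree $n-1$ and the identification $\ker i_\ast=\ker L$ via $i^\ast i_\ast=d\,L$) pins down kernel and image; and strictness of morphisms of mixed Hodge structures transports the conclusion to $\mathrm{Gr}_F^{\bullet}$, with the Tate twist accounting for the shift from $F^{n-p}$ to $H^{n-1-p,p-1}$. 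Two small points are worth making explicit. First, you characterize the primitive part as $\ker\bigl(L:H^{n-2}(Y)\to H^{n}(Y)\bigr)$, whereas the paper defines it (for $n-2$ even) as the kernel of cup product with $h_Y^{(n-2)/2}$; for the middle cohomology of a smooth hypersurface these coincide, since weak Lefschetz forces the Lefschetz decomposition of $H^{n-2}(Y)$ to be $P^{n-2}\oplus K\,h_Y^{(n-2)/2}$, but this identification deserves a sentence. Second, the paper works over an arbitrary field $K$ of characteristic zero, while your argument is Hodge-theoretic over $\C$; one should either invoke the algebraic de Rham version of the Gysin sequence together with the Lefschetz principle, or note that the filtration and the residue map are defined algebraically and compatible with base change. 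Neither point is a gap in substance.
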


With the above theorems, we can calculate Picard-Fuchs equations. For details of the method, see \cite[\S 5.3]{CK}.

\subsection{Picard-Fuchs equations of the generalized Dwork family}\label{Picard-Fuchs.sect}
Let $(n, d, W)$ be the data as in \S \ref{defn-sect} and put $d_W = lcm(w_1, \ldots, w_n)d$. 
Let $K$ be a field of charcteristic zero containing primitive $d_W$-th roots of unity, and let $\pi:\mathbb{X} \rightarrow \mathbb{A}^1$
be the generalized Dwork family 
over $K$ defined by the equation \eqref{eq:generalized}.
By Lemma \ref{lemma:smooth}, $\pi$ is smooth on the affine variety
\begin{equation}
  \label{eq:DefinitionofU}
  U := \spec K[\lambda, (1-\lambda^d)^{-1}]\hookrightarrow \mathbb{A}^1_K.
\end{equation}
We often write $\pi^{-1}(U)\to U$ by $\mathbb{X}\to U$ if there is no fear of confusion.

Let $\Prim^{n-2}_{\dR}$ be the primitive part of $H^{n-2}_{\dR}(\mathbb{X}/U)$
defined in the same way as in \S \ref{defn-sect}, and
let 
\[
\Prim^{n-2}_{\dR}=\bigoplus_{V}\Prim^{n-2}_{\dR}(V\bmod W)
\]
be the eigendecomposition by $\Gamma_W/\Delta$.
The residue map 
\[\mathrm{Res}: H^{n-1}_{\dR}((\PP-\mathbb{X})/U) \xrightarrow{\sim} \Prim^{n-2}_{\dR}\]
is horizontal and $\Gamma_W/\Delta$-equivariant (e.g. \cite[\S 8]{katz}).
By this isomorphism we identify both sides.

\begin{theorem}
  \label{theorem:picard-fuchs} 
  Let $V=(v_1, \ldots, v_n) \in (\Z/d\Z)^n_0$ be a totally nonzero element.
  Let $D_\lambda = \lambda\frac{d}{d\lambda} = \lambda\nabla^{\mathrm{GM}}_{d/d \lambda}$.
  Put $p := \deg V$ and
  \begin{align}
    \omega_V &:= \mathrm{Res}\left(\frac{X_1^{\tilde{v}_1-1}\cdots X_n^{\tilde{v}_n-1}}{Q_\lambda^p}\Omega\right) \in \Prim^{n-2}_{\dR}(V\bmod W),
     \label{theorem:picard-fuchs.eq1} 
 \\
    P'(V, W) &:= \prod_{k=0}^{d-1}(D_\lambda-k)-\lambda^d\prod_{i=1}^n\prod_{j=0}^{w_i-1}\left(D_\lambda+\frac{\tilde{v}_i+jd}{w_i}\right)
     \label{theorem:picard-fuchs.eq2} 
  \end{align}
  where $\tilde{v}_i\in\{0,1,\ldots,d-1\}$ denotes the unique integer 
  such that $\tilde{v}_i\equiv v_i\bmod d$.
 Then $P'(V, W)\omega_V = 0$.
\end{theorem}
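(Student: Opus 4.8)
The plan is to prove $P'(V,W)\omega_V=0$ by working upstairs in $H^{n-1}_{\dR}((\PP-\mathbb{X})/U)$ with the explicit rational forms and then pushing the resulting identity of classes down through the residue map. Since $\mathrm{Res}$ is horizontal, it commutes with $D_\lambda=\lambda\nabla^{\mathrm{GM}}_{d/d\lambda}$ and hence with the polynomial operator $P'(V,W)$, so it suffices to prove the identity for the class of $\tfrac{X^{\tilde v-1}}{Q_\lambda^{p}}\Omega$ in the cohomology of the complement. For a monomial $A$ of degree $qd-n$ write $\eta_q(A):=\bigl[\tfrac{A}{Q_\lambda^{q}}\Omega\bigr]$, set $Z:=X_1^{w_1}\cdots X_n^{w_n}$ and $Y_i:=X_i^d$ (so that $Q_\lambda=\sum_i w_iY_i-d\lambda Z$), and abbreviate $X^{\tilde v-1}:=X_1^{\tilde v_1-1}\cdots X_n^{\tilde v_n-1}=:B_0$ and $B_r:=B_0Z^r$, so that $\omega_V=\mathrm{Res}\,\eta_p(B_0)$. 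The whole argument rests on two families of identities, a differentiation formula and a Griffiths-type reduction formula, which I establish first; note that $p=\deg V$ is a positive integer, so no denominators below vanish.

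First I compute the connection. Because $A$ and $\Omega$ do not involve $\lambda$ and $\partial Q_\lambda/\partial\lambda=-dZ$, differentiating coefficients gives the Gauss--Manin formula
\[D_\lambda\,\eta_q(A)=qd\,\lambda\,\eta_{q+1}(AZ).\]
Applying this with $A=B_r$ (so $AZ=B_{r+1}$) and inducting on $r$ — using $(D_\lambda-r)(\lambda^{r}g)=\lambda^{r}D_\lambda g$ — one checks that
\[\prod_{k=0}^{r-1}(D_\lambda-k)\,\eta_p(B_0)=C_r\,\lambda^{r}\,\eta_{p+r}(B_r),\qquad C_r:=d^{\,r}\prod_{m=0}^{r-1}(p+m).\]
In particular, for $r=d$,
\[\prod_{k=0}^{d-1}(D_\lambda-k)\,\eta_p(B_0)=C_d\,\lambda^{d}\,\eta_{p+d}(B_d).\]
This disposes of the first term of $P'(V,W)$ and isolates the single class $\eta_{p+d}(B_d)$ with $B_d=B_0Z^{d}$.

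Next I treat the second term. Applying \eqref{eq:griffiths} with numerator $X_iA$ (for a monomial $A=X^m$), together with the factorization $X_i\,\partial Q_\lambda/\partial X_i=dw_i(Y_i-\lambda Z)$, I obtain $dw_i\,\eta_{q+1}\!\bigl(A(Y_i-\lambda Z)\bigr)=\tfrac{1+m_i}{q}\eta_q(A)$; eliminating $\lambda\,\eta_{q+1}(AZ)$ via the differentiation formula turns this into the reduction identity
\[\eta_{q+1}(AY_i)=\frac{1}{dq}\Bigl(D_\lambda+\frac{1+m_i}{w_i}\Bigr)\eta_q(A).\]
The decisive observation is that $Z^{d}=\prod_{i=1}^n Y_i^{\,w_i}$, so $B_d=B_0\prod_i Y_i^{w_i}$ is obtained from $B_0$ by multiplying by $Y_i$ exactly $w_i$ times for each $i$. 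Carrying out these $d$ multiplications one at a time and applying the reduction identity at each step: when we multiply by $Y_i$ for the $j$-th time the current $X_i$-exponent satisfies $1+m_i=\tilde v_i+jd$, while the degree index runs through $p,p+1,\dots,p+d-1$. Since all the resulting factors are polynomials in the single operator $D_\lambda$ they commute, so the order of multiplication is immaterial and
\[\prod_{i=1}^n\prod_{j=0}^{w_i-1}\Bigl(D_\lambda+\frac{\tilde v_i+jd}{w_i}\Bigr)\eta_p(B_0)=C_d\,\eta_{p+d}(B_d).\]
Comparing with the previous display gives $\prod_{k}(D_\lambda-k)\eta_p(B_0)=\lambda^{d}\prod_{i,j}(D_\lambda+\tfrac{\tilde v_i+jd}{w_i})\eta_p(B_0)$, i.e. $P'(V,W)\eta_p(B_0)=0$; applying the horizontal map $\mathrm{Res}$ yields $P'(V,W)\omega_V=0$, with no inversion of $\lambda$ required.

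I expect the point requiring most care to be the bookkeeping that makes the two computations land on the same class. The structural key is the identity $Z^{d}=\prod_i Y_i^{w_i}$, which is precisely what couples the shift $\lambda^{d}$ produced by $d$-fold differentiation to the product of $d$ Griffiths reductions; matching the degree indices $p,\dots,p+d-1$ and the exponents $1+m_i=\tilde v_i+jd$ on the two sides, and invoking commutativity of the factors to dispense with path-independence, is where the hypergeometric parameters $\tfrac{\tilde v_i+jd}{w_i}$ and the integers $k$ emerge. The remaining points — that $p=\deg V$ is a positive integer so that no $C_r$ or $dq$ vanishes, and that the Gauss--Manin connection is computed by naive differentiation of the rational forms while $\mathrm{Res}$ is horizontal — are routine given Lemma \ref{lemma:smooth}, \eqref{eq:griffiths}, and the stated properties of $\mathrm{Res}$.
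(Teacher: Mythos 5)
Your proof is correct. It is, however, essentially a de-parametrized version of the paper's argument rather than a wholly different one, so a brief comparison is in order. The paper introduces auxiliary parameters $\mu_1,\ldots,\mu_{n+1}$, replaces $Q_\lambda$ by $Q_\mu=-d\sum_i\mu_iX_i^d-d\mu_{n+1}X_1^{w_1}\cdots X_n^{w_n}$, and verifies that $\omega_\mu$ satisfies a GKZ system: its first equation records that $(\partial/\partial\mu_{n+1})^d$ and $\prod_i(\partial/\partial\mu_i)^{w_i}$ both send $\omega_\mu$ to $p(p+1)\cdots(p+d-1)\,X^{\tilde v-1+dW}\Omega/Q_\mu^{p+d}$, and its second is the Griffiths relation \eqref{eq:griffiths} in the form $dD_i\omega_\mu+w_iD_{n+1}\omega_\mu=-\tilde v_i\omega_\mu$; the Picard--Fuchs operator then falls out by rewriting $(\partial/\partial\mu_i)^m=\mu_i^{-m}\prod_j(D_i-j)$ and specializing $\mu=(-w_1/d,\ldots,-w_n/d,\lambda)$. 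Your two displayed identities are precisely these two GKZ equations after specialization: your $d$-fold application of $D_\lambda\,\eta_q(A)=qd\lambda\,\eta_{q+1}(AZ)$ is the paper's $(\partial/\partial\mu_{n+1})^d$ computation, your reduction identity $\eta_{q+1}(AY_i)=\tfrac{1}{dq}\bigl(D_\lambda+\tfrac{1+m_i}{w_i}\bigr)\eta_q(A)$ is the composite of \eqref{eq:griffiths} with the differentiation formula that the paper encodes in its second GKZ equation, and both arguments pivot on the same structural identity $Z^d=\prod_iY_i^{w_i}$. What your route buys is the elimination of the auxiliary parameters: everything happens in the one-variable $\mathcal D$-module where the statement lives, at the cost of doing by hand the bookkeeping (the indices $p,\ldots,p+d-1$ and the exponents $\tilde v_i+jd$) that the $\mu_i$ automate; what the paper's route buys is the explicit link to GKZ systems, which is the technique of \cite[Lemma 2]{gahrs} it deliberately invokes. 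Two very minor points of hygiene: your phrase ``when we multiply by $Y_i$ for the $j$-th time the current $X_i$-exponent satisfies $1+m_i=\tilde v_i+jd$'' is off by one if ``$j$-th'' starts at $1$ (the exponent before the $j$-th multiplication gives $1+m_i=\tilde v_i+(j-1)d$), though your final product over $j=0,\ldots,w_i-1$ is the correct one; and you should say explicitly that \eqref{eq:griffiths} and the naive differentiation of $A\Omega/Q_\lambda^q$ are being used in the relative setting over $U$, which is legitimate and is exactly how the paper uses them.
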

       \begin{proof}	
	Let $\mu=(\mu_1, \ldots, \mu_{n+1})$ be parameters and let
	\begin{align*}
          Q_{\mu} &= -d\mu_1X_1^d- \cdots - d\mu_nX_n^d -d\mu_{n+1} X_1^{w_1}\cdots X_n^{w_n},\\
          \omega_\mu &= \frac{X_1^{\tilde{v}_1-1}\cdots X_n^{\tilde{v}_n-1}}{Q_{\mu}^p}\Omega,\\
          D_i &= \mu_i\frac{\partial}{\partial \mu_i}.
        \end{align*}
         We show that $\omega_\mu$ satisfies the GKZ system
        given by
	\[n\times (n+1)\text{-matrix}\  (a_{ij})=\begin{pmatrix}
		d & \cdots & 0 & w_1 \\
		\vdots & \ddots & \vdots & \vdots \\
		0 & \cdots & d & w_n 
	\end{pmatrix}\ \text{and}\   
	\begin{pmatrix}
		\tilde{v_1} \\ \vdots \\ \tilde{v_n}
          \end{pmatrix}\]
i.e. $\omega_\mu$ satisfies the system
         \begin{empheq}[left={\empheqlbrace}]{align}
           &\left(\frac{\partial}{\partial \mu_{n+1}}\right)^d\omega_\mu=\prod_{i=1}^n\left(\frac{\partial}{\partial\mu_i}\right)^{w_i}\omega_\mu \label{eq:1st}\\
           &\left(\sum_{j=1}^{n+1}a_{ij}D_j+\tilde{v_i}\right)\omega_\mu = 0 \hspace{10pt} (i = 1,\ldots, n)\label{eq:2nd}.
         \end{empheq}

         We can easily check that
         \[\left(\frac{\partial}{\partial\mu_{n+1}}\right)^d\omega_\mu=p(p+1)\cdots (p+d-1)\frac{X_1^{\tilde{v}_1-1+dw_1}\cdots X_n^{\tilde{v}_n-1+dw_n}}{Q_{\mu}^{p+d}}\Omega =\prod^n_{i=1}\left(\frac{\partial}{\partial\mu_i}\right)^{w_i}\omega_\mu,\]
	thus we obtain the equation $(\ref{eq:1st})$.

	$(\ref{eq:2nd})$ can be written as
	\[\left(\begin{pmatrix}dD_1 \\ \vdots \\ dD_n\end{pmatrix}+\begin{pmatrix}w_1D_{n+1} \\ \vdots \\ w_nD_{n+1}
                                                              \end{pmatrix}+\begin{pmatrix}\tilde{v_1} \\ \vdots \\ \tilde{v_n}\end{pmatrix}\right)\omega_\mu=0.\]
	 For $1 \leq i \leq n$
	\begin{align*}
		dD_i\omega_\mu+w_iD_{n+1}\omega_\mu&=pd^2\mu_iX_i^d\frac{X_1^{\tilde{v}_1-1}\cdots X_n^{\tilde{v}_n-1}}{Q_{\mu}^{p+1}}\Omega+pw_id\mu_{n+1} X_1^{w_1}\cdots X_n^{w_n}\frac{X_1^{\tilde{v}_1-1}\cdots X_n^{\tilde{v}_n-1}}{Q_{\mu}^{p+1}}\Omega\\
		&=-p\frac{\partial Q_{\mu}}{\partial X_i}\frac{X_1^{\tilde{v}_1-1}\cdots X_i^{\tilde{v}_i}\cdots X_n^{\tilde{v}_n-1}}{Q_{\mu}^{p+1}}\Omega\\
		&\overset{(\ref{eq:griffiths})}{=}-\tilde{v}_i \omega_\mu,
	\end{align*}
	we get the equation $(\ref{eq:2nd})$.

	From $(\ref{eq:1st})$, note that $(\frac{\partial}{\partial\mu_i})^{m}=\left(\mu_i^{-1}D_i\right)^m=\mu_i^{-m}\prod_{j=0}^{m-1}(D_i-j)$, we have
		\[\mu_{n+1}^{-d}\prod_{k=0}^{d-1}(D_{n+1}-k)\omega_\mu=\prod_{i=1}^n\mu_i^{-w_i}\prod_{j=0}^{w_i-1}(D_i-j)\omega_\mu.
	\]
        Moreover  by the equation $(\ref{eq:2nd})$ the right hand side is equal to
	 \begin{align*}
	 	\prod_{i=1}^n\mu_i^{-w_i}\prod_{j=0}^{w_i-1}\left(-\frac{w_i}{d}D_{n+1}-\frac{\tilde{v}_i}{d}-j\right)\omega_\mu=
	 	\prod_{i=1}^n\left(-\frac{d}{w_i}\mu_i\right)^{-w_i}\prod_{j=0}^{w_i-1}\left(D_{n+1}+\frac{\tilde{v}_i+jd}{w_i}\right)\omega_\mu.
	 \end{align*}
         By putting $\mu = (-w_1/d, \ldots, -w_n/d, \lambda)$ we obtain the desired equation.
       \end{proof}
       
       Fix $b=(b_1, \ldots, b_n) \in \Z^{n}$ such that $\sum^n_{i=1}b_iw_i = 1$, and let $\mathbb{Y}/\mathbb{G}_m-\{1\}$ be the descent family defined by the equation (\ref{eq:descent}).

         \begin{definition}
    Let $m\geq 1$ be an integer and let  $\alpha_1, \ldots, \alpha_m, \beta_1, \ldots, \beta_m$ be rational numbers. The differential operator 
\begin{equation}
  \label{eq:hypergeometric}
  \prod_{i=1}^{m}(D_t + \beta_i -1) - t\prod_{i=1}^m(D_t + \alpha_i)
\end{equation}
on $\mathbb{G}_m - \{1\}$ is called a hypergeometric operator (of type $(m, m)$).
$\alpha_i, \beta_i$ are called parameters and we denote the hypergeometric operator of the parameters $\alpha_1, \ldots, \alpha_m, \beta_1, \ldots, \beta_m$ as
\[Hyp
  \left(
  \begin{matrix}
    \alpha_1, &\ldots, &\alpha_m\\ \beta_1, &\ldots, &\beta_m
  \end{matrix}
  ; t \right) \text{ or  }Hyp(\alpha_1, \ldots, \alpha_m; \beta_1, \ldots, \beta_m; t).\]
\end{definition}

    \begin{proposition}\label{proposition:descent}
Let $V \in (\Z/d\Z)_0^n/\langle W\rangle$ be a totally nonzero element.
Put $N = \sum_{i=1}^nb_i\tilde{v}_i$. Then \[\omega_{V, t} := \mathrm{Res}
  \left(\frac{Y_1^{\tilde{v}_1-1}\cdots Y_n^{\tilde{v}_n-1}}{Q_t^{\deg V}}\Omega\right) \in \Prim^{n-2}_{\dR, \text{\rm desc}}(V \bmod W)\] is annihilated by the hypergeometric operator
  \begin{align*}
  Hyp'(V, W, b) :&= Hyp
  \left(
  \begin{matrix}
\frac{N}{d}, &\frac{1}{d} + \frac{N}{d}, & \ldots, &\frac{d-1}{d} + \frac{N}{d}\\
\frac{w_1d - \tilde{v}_1}{w_1d} +\frac{N}{d},&\frac{(w_2-1)d - \tilde{v}_2}{w_2d} +\frac{N}{d}, & \ldots, &\frac{d - \tilde{v}_n}{w_nd} +\frac{N}{d}
  \end{matrix}
                  ; t \right)\\
                &=\prod_{i=1}^n\prod_{j=0}^{w_i-1}\left(D_t+\frac{(w_i- j)d - \tilde{v}_i}{w_id} +\frac{N}{d} - 1\right)-t\prod_{k=0}^{d-1}\left(D_t + \frac{k}{d} + \frac{N}{d}\right).
    \end{align*}
\end{proposition}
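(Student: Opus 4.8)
The plan is to transport the Picard-Fuchs operator $P'(V,W)$ of Theorem~\ref{theorem:picard-fuchs}, which annihilates $\omega_V$ on the $\lambda$-family, to the descent family by means of the change of variables $Y_i=\lambda^{b_i}X_i$ together with $t=\lambda^{-d}$. The decisive first step is to pin down how the two cohomology classes are related. Substituting $X_i=\lambda^{-b_i}Y_i$ identifies the fibre equations $Q_\lambda$ and $Q_t$, while in the relative cotangent space $dX_i=\lambda^{-b_i}dY_i$, so $\Omega$ picks up a factor $\lambda^{-\sum_i b_i}$ and the monomial $X_1^{\tilde{v}_1-1}\cdots X_n^{\tilde{v}_n-1}$ a factor $\lambda^{-(N-\sum_i b_i)}$, where $N=\sum_i b_i\tilde{v}_i$. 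Collecting the powers of $\lambda$ and applying $\mathrm{Res}$, I expect to obtain
\[ [d]^*\omega_{V,t}=\lambda^{N}\omega_V, \]
equivalently $\omega_V=\lambda^{-N}[d]^*\omega_{V,t}$.

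Granting this, the next step is purely operator-theoretic. Writing $\eta:=[d]^*\omega_{V,t}=\lambda^{N}\omega_V$, the relation $P'(V,W)\omega_V=0$ becomes $\bigl(\lambda^{N}P'(V,W)\lambda^{-N}\bigr)\eta=0$. Conjugation by $\lambda^{-N}$ fixes $\lambda$ and sends $D_\lambda\mapsto D_\lambda-N$, so the conjugated operator is again a polynomial in $D_\lambda$ carrying a single factor $\lambda^{d}$. I then descend along $t=\lambda^{-d}$, under which the pulled-back Gauss-Manin connection satisfies $D_\lambda=-dD_t$ and $\lambda^{d}=t^{-1}$; these substitutions rewrite $\lambda^{N}P'(V,W)\lambda^{-N}$ as an operator $\tilde{L}$ in $D_t$ and $t$. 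By construction $\tilde{L}$ pulls back to $\lambda^{N}P'(V,W)\lambda^{-N}$, so $[d]^*(\tilde{L}\,\omega_{V,t})=\bigl(\lambda^{N}P'(V,W)\lambda^{-N}\bigr)\eta=0$; since $[d]$ is a finite \'etale covering, pullback is injective on sections and hence $\tilde{L}\,\omega_{V,t}=0$.

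It then remains to recognize $\tilde{L}$ as the stated hypergeometric operator. Pulling a factor $-d$ out of each of the $d$ linear terms in the two products turns $\tilde{L}$ into $(-d)^{d}\bigl(A(D_t)-t^{-1}B(D_t)\bigr)$ with $A(D_t)=\prod_{k=0}^{d-1}\bigl(D_t+\tfrac{k}{d}+\tfrac{N}{d}\bigr)$ and $B(D_t)=\prod_{i=1}^{n}\prod_{j=0}^{w_i-1}\bigl(D_t+\tfrac{N}{d}-\tfrac{\tilde{v}_i}{w_id}-\tfrac{j}{w_i}\bigr)$; multiplying on the left by $-t$ replaces this by $B(D_t)-tA(D_t)$, which annihilates the same class. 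The arithmetic identity $\tfrac{N}{d}-\tfrac{\tilde{v}_i}{w_id}-\tfrac{j}{w_i}=\tfrac{(w_i-j)d-\tilde{v}_i}{w_id}+\tfrac{N}{d}-1$ then shows that $B(D_t)-tA(D_t)$ is exactly $Hyp'(V,W,b)$, which completes the argument.

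The step I expect to be the main obstacle is the first one: carrying out the weighted substitution $Y_i=\lambda^{b_i}X_i$ simultaneously with the base change $t=\lambda^{-d}$ so as to isolate the twist $\lambda^{N}$ cleanly, and verifying that the Gauss-Manin connection of the descent family pulls back compatibly, so that the replacement $D_\lambda=-dD_t$ is legitimate on pulled-back classes. Once the relation $[d]^*\omega_{V,t}=\lambda^{N}\omega_V$ is secured, the remaining manipulations are a formal conjugation, a substitution, and a parameter-matching computation.
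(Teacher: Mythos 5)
Your proposal is correct and follows essentially the same route as the paper: establish $[d]^*\omega_{V,t}=\lambda^{N}\omega_V$ from the weighted substitution, conjugate $P'(V,W)$ by $\lambda^{-N}$ (shifting $D_\lambda\mapsto D_\lambda-N$), then substitute $D_\lambda=-dD_t$, $\lambda^d=t^{-1}$, normalize by $(-d)^d$ and multiply by $-t$ to match the parameters of $Hyp'(V,W,b)$. The only addition is your explicit justification that pullback along the finite cover $[d]$ is injective, a point the paper leaves implicit.
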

\begin{proof}
  Since the equation (\ref{eq:descent}) is obtained by putting $Y_i= \lambda^{b_i}X_i$ and $t = \lambda^{-d}$,  the section 
  \[\lambda^N\omega_V = \frac{(\lambda^{b_1}X_1)^{\tilde{v}_1-1}\cdots (\lambda^{b_n}X_n)^{\tilde{v}_n-1}}{Q_\lambda}\sum_{i=1}^n(-1)^{n+1}\lambda^{b_i}X_id(\lambda^{b_1}X_1)\cdots \widehat{d(\lambda^{b_i}X_i)}\cdots d(\lambda^{b_n}X_n)\]of $\Prim^{n-2}_{\dR}(V, W)|_{U-\{0\}}$
    descents to $\omega_{V, t}$.
    From Theorem \ref{theorem:picard-fuchs} we have
    \begin{align*}
      0 &= (P'(V, W)\lambda^{-N})\lambda^N\omega_V\\
    &= \lambda^{-N}\left(\prod_{k=0}^{d-1}(D_\lambda-k -N)-\lambda^d\prod_{i=1}^n\prod_{j=0}^{w_i-1}\left(D_\lambda+\frac{\tilde{v}_i+jd}{w_i}-N\right)\right)\lambda^N\omega_V .
    \end{align*}
  Since the action of $D_\lambda$ on $\Prim^{n-2}_{\dR}(V, W)|_{U-\{0\}}$ corresponds to $-d D_t$, by variable transforming and dividing by a power of $-d$, we can see that $\omega_{V, t}$ is annihilated by $Hyp'(V, W, b)$.
  \end{proof}
  \begin{definition}
    For a hypergeometric differential operator
\[  Hyp(\alpha_1, \ldots, \alpha_m; \beta_1, \ldots, \beta_m ; t) = \prod_{i=1}^{m}(D_t + \beta_i -1) - t\prod_{i=1}^m(D_t + \alpha_i)
\]
whose parameters $(\alpha_1, \cdots, \alpha_r, \cdots, \alpha_m; \beta_1, \cdots, \beta_r, \cdots,\beta_{m})$ are ordered so that $\alpha_i \equiv \beta_i \bmod \Z$ for all $i > r$, we define the cancel operation as
\[\mathrm{\bf Cancel}\ Hyp(\alpha_1, \cdots, \alpha_m; \beta_1, \cdots, \beta_{m}; t) := Hyp(\alpha_1, \cdots, \alpha_r; \beta_1, \cdots, \beta_{r}; t).\]
\end{definition}

\begin{lemma}\label{lemma:cancel}
Let $0 \leq j \leq w_i-1, 0 \leq k \leq d-1$ and $1\leq \tilde{v}_i\leq d-1$
be integers. 
Then, the condition
  \[\frac{(w_i- j)d - \tilde{v}_i}{w_id} \equiv \frac{k}{d} \mod \Z \]
    implies
   \[\frac{(w_i- j)d - \tilde{v}_i}{w_id} = \frac{k}{d}\]
In particular, the cancel operation removes pairs of identical parameters from $Hyp'(V, W, b)$
in Proposition \ref{proposition:descent}.
\end{lemma}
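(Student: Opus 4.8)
The plan is to prove the implication by a simple interval argument: I would show that both rational numbers occurring in the congruence lie in $[0,1)$, so that being congruent modulo $\Z$ forces them to be equal. Write $x := \frac{(w_i-j)d-\tilde{v}_i}{w_id}$ and $y := \frac{k}{d}$. The first step is to bound the numerator of $x$. From $0\le j\le w_i-1$ one gets $1\le w_i-j\le w_i$, hence $d\le (w_i-j)d\le w_id$; combining this with $1\le \tilde{v}_i\le d-1$ yields $1\le (w_i-j)d-\tilde{v}_i\le w_id-1$. Therefore $0<x<1$. On the other hand $0\le y<1$ because $0\le k\le d-1$.

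Next I would conclude. Since $x,y\in[0,1)$, the difference $x-y$ lies in the open interval $(-1,1)$. The hypothesis $x\equiv y \pmod{\Z}$ means $x-y\in\Z$, and the only integer in $(-1,1)$ is $0$; hence $x=y$, which is precisely the asserted equality $\frac{(w_i-j)d-\tilde{v}_i}{w_id}=\frac{k}{d}$. There is no genuine obstacle here, and the only point that requires care is the boundary analysis of the numerator: one must use $\tilde{v}_i\ge 1$ to keep the numerator strictly below $w_id$ (so $x\ne 1$), and $\tilde{v}_i\le d-1$ together with $w_i-j\ge 1$ to keep it strictly positive (so $x\ne 0$); these strict inequalities are exactly what guarantees $x$ never reaches an endpoint, which is what the argument rests on.

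Finally, for the \emph{in particular} statement I would read off the parameters of $Hyp'(V,W,b)$ from Proposition \ref{proposition:descent}. The upper parameters are $\alpha_k=\frac{k}{d}+\frac{N}{d}$ for $0\le k\le d-1$, and the lower parameters are $\beta_{i,j}=\frac{(w_i-j)d-\tilde{v}_i}{w_id}+\frac{N}{d}$ for $1\le i\le n$ and $0\le j\le w_i-1$. A pair $(\alpha_k,\beta_{i,j})$ is a candidate for the cancel operation exactly when $\alpha_k\equiv\beta_{i,j}\pmod{\Z}$; subtracting the common shift $\frac{N}{d}$, this is precisely the congruence hypothesis of the lemma, which then forces $\alpha_k=\beta_{i,j}$. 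Thus every pair removed by the cancel operation consists of two genuinely identical parameters, as claimed.
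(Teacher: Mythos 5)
Your proof is correct and follows essentially the same route as the paper: both arguments reduce to showing that the difference of the two parameters lies strictly in the open interval $(-1,1)$, so that congruence modulo $\Z$ forces equality (the paper bounds the difference directly by algebraic manipulation, while you bound each term separately, which amounts to the same estimate). The treatment of the \emph{in particular} clause via the common shift $\frac{N}{d}$ is also the intended reading.
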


\begin{proof}
This is immediate as
  \[\frac{(w_i- j)d - \tilde{v}_i}{w_id} - \frac{k}{d}=1-\frac{jd+\tilde{v}_i+w_ik}{w_id}
  <1,\]
  \[\frac{(w_i- j)d - \tilde{v}_i}{w_id} - \frac{k}{d}
  =\frac{d - k}{d} - \frac{\tilde{v}_i+jd}{w_id}
   \geq\frac{1}{d} - \frac{w_id - 1}{w_id}>-1.\]
\end{proof}
  \begin{theorem}\label{theorem:picard-fuchs-descent}
    Under the setting of Proposition \ref{proposition:descent}, $\omega_{V, t}$ annihilated by
    \[Hyp(V, W, b) := \mathrm{\bf Cancel}\  Hyp'(V, W, b).\]
  \end{theorem}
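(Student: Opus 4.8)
The plan is to deduce the statement from Proposition \ref{proposition:descent} and the irreducibility in Lemma \ref{lemma:irreducibility}, by realizing the cancel operation as a genuine left factorization of differential operators. Throughout I fix an embedding $K\hookrightarrow\C$ (possible since $K$ has characteristic zero) and work with the analytic eigen-local system $\mathcal{M}:=\Prim^{n-2}_{\dR,\text{\rm desc}}(V\bmod W)$ on $\mathbb{G}_{m,\C}-\{1\}$, viewed as a $\mathcal{D}$-module via the Gauss--Manin connection; the asserted identity $Hyp(V,W,b)\,\omega_{V,t}=0$ is the vanishing of a section defined over $K$, so it may be checked after this base change. Note that $\omega_{V,t}\in\mathcal{M}$ and that $\mathcal{M}$ is stable under $\mathcal{D}$.

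The first step is the purely algebraic identity
\[(D_t+c-1)\circ Hyp(\alpha_1,\ldots,\alpha_s;\beta_1,\ldots,\beta_s;t)=Hyp(\alpha_1,\ldots,\alpha_s,c;\beta_1,\ldots,\beta_s,c;t),\]
valid for any $c$, which follows at once from the commutation rule $D_t\circ t=t\circ(D_t+1)$ (whence $(D_t+c-1)\circ t=t\circ(D_t+c)$), together with the fact that polynomials in $D_t$ commute. By Lemma \ref{lemma:cancel} the parameters removed by the cancel operation occur in genuinely equal pairs $\alpha=\beta$, so iterating this identity over the cancelled pairs yields a factorization
\[Hyp'(V,W,b)=Q\circ Hyp(V,W,b),\qquad Q:=\prod_{c}(D_t+c-1)\in\C[D_t],\]
the product running over the cancelled parameters $c$. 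Here the hypothesis $\alpha=\beta$ (not merely $\alpha\equiv\beta\bmod\Z$) is exactly what makes each factor take the clean form $(D_t+c-1)$, so Lemma \ref{lemma:cancel} is essential at this step.

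Now set $\eta:=Hyp(V,W,b)\,\omega_{V,t}\in\mathcal{M}$. By Proposition \ref{proposition:descent} we have $Q\,\eta=Hyp'(V,W,b)\,\omega_{V,t}=0$, so the cyclic $\mathcal{D}$-submodule $\mathcal{D}\eta\subseteq\mathcal{M}$ is a quotient of $\mathcal{D}/\mathcal{D}Q$. Since $Q$ is a polynomial in $D_t=t\,d/dt$ alone, the connection $\mathcal{D}/\mathcal{D}Q$ extends across $t=1$ — its local solutions are spanned by powers $t^{1-c}$, possibly with logarithms — and hence has trivial local monodromy at $t=1$; the same is therefore true of its quotient $\mathcal{D}\eta$. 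On the other hand, $\mathcal{M}$ is an irreducible hypergeometric $\mathcal{D}$-module on $\mathbb{G}_{m,\C}-\{1\}$, whose local monodromy at $t=1$ is a nontrivial pseudoreflection; concretely, the leading coefficient of $Hyp(V,W,b)$ as an ordinary differential operator in $d/dt$ is a unit multiple of a polynomial divisible by $(1-t)$, so $t=1$ is a genuine singular point (see \cite{esde}). Consequently $\mathcal{D}\eta$ cannot equal $\mathcal{M}$, and by the irreducibility in Lemma \ref{lemma:irreducibility} the only remaining possibility is $\mathcal{D}\eta=0$, i.e. $\eta=0$, which is the desired conclusion $Hyp(V,W,b)\,\omega_{V,t}=0$.

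The routine part is the commutation identity and the iteration giving $Hyp'=Q\circ Hyp$. I expect the main point requiring care to be the local analysis at $t=1$: contrasting the trivial monodromy of $\mathcal{D}/\mathcal{D}Q$ with the genuine pseudoreflection monodromy of the irreducible hypergeometric $\mathcal{M}$. It is precisely this contrast, combined with irreducibility, that forces $\eta$ to vanish rather than merely to generate a proper $\mathcal{D}$-submodule of $\mathcal{M}$, and thereby upgrades the one-sided factorization into the annihilation statement.
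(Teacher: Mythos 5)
Your proof is correct and follows essentially the same route as the paper: both realize the cancel operation as a left factorization $Hyp'=Q\circ Hyp$ with $Q$ a product of factors $(D_t+c-1)$ (using Lemma \ref{lemma:cancel} to get genuine equality of the cancelled parameters), and both then invoke the irreducibility from Lemma \ref{lemma:irreducibility} to force $Hyp(V,W,b)\,\omega_{V,t}=0$. The only difference is presentational: the paper peels off one first-order factor at a time and notes that a nonzero section killed by $D_t-c$ would make the irreducible eigencomponent a Kummer module, whereas you cancel all factors at once and phrase the same contradiction via the trivial versus pseudoreflection local monodromy at $t=1$.
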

  \begin{proof}
    Suppose that $\omega \in \Prim^{n-2}_{\dR, \text{\rm desc}}(V \bmod W)$ is annihilated by the operator $D_t - c$ for some $c \in \Q$. If $\omega \neq 0$, the submodule generated by $\omega$ is isomorphic to $\mathcal{D}/\mathcal{D}(D_t - c)$. 
    Since $\Prim^{n-2}_{\dR, \text{\rm desc}}(V \bmod W)$ is irreducible by
Lemma \ref{lemma:irreducibility}, 
we have $\Prim^{n-2}_{\dR, \text{\rm desc}}(V \bmod W)\cong \mathcal{D}/\mathcal{D}(D_t - c)$, and hence
$\mathcal{D}/\mathcal{D}(D_t - c)\cong \mathcal{D}/\mathcal{D}Hyp(\alpha; \beta; t)$ whose parameters satisfy the condition $\alpha - \beta \notin \Z$. 
     This is impossible. This shows $\omega = 0$.
    
              Write
          \[Hyp'(V, W, b) = Hyp(\alpha_1, \ldots, \alpha_d; \beta_1, \ldots, \beta_d; t)\]
          so that $\alpha_i = \beta_i$ for each $i > r$. 
          Since
                    \begin{align*}
                    &Hyp(\alpha_1, \ldots, \alpha_d; \beta_1, \ldots, \beta_d; t) \\
                    = 
                    &(D_t  + \beta_i -1)Hyp(\alpha_1, \ldots, \widehat{\alpha_i}, \ldots, \alpha_d; \beta_1, \ldots, \widehat{\beta_i}, \ldots,\beta_d; t),\end{align*}
the element
          \[Hyp(\alpha_1, \ldots, \widehat{\alpha_i}, \ldots, \alpha_d; \beta_1, \ldots, \widehat{\beta_i}, \ldots,\beta_d; t)\omega_{V, t} 
          \in \Prim^{n-2}_{\dR, \text{\rm desc}}(V \bmod W)\]
is annihilated by $(D_t  + \beta_i -1)$, and hence it vanishes.
          Repeating this argument, we have that $Hyp(V, W, b)\omega_{V, t} = 0$ by Lemma \ref{lemma:cancel}.
       \end{proof}
       \begin{example}
         In the case of $V = W$, $N = \sum_{i=1}^nb_i\tilde{v}_i =\sum_{i=1}^nb_iw_i = 1$. Then the section \[\omega_{W, t} = \mathrm{Res} \left(\frac{Y_1^{w_1-1}\cdots Y_n^{w_n-1}}{Q_t}\Omega\right)\] is annihilated by
         \[\mathrm{\bf Cancel }\ Hyp
  \left(
  \begin{matrix}
\frac{1}{d}, &\frac{2}{d}, & \ldots, &1\\ \frac{w_1}{w_1},&\frac{(w_1-1)}{w_1}, & \ldots, &\frac{1}{w_n}
  \end{matrix}
  ; t \right).\]
This result corresponds to Lemma 8.3 of \cite{katz}.
\end{example}
We turn to the family $\mathbb{X}/U$.
       Let $V,W$ be as before, and 
       $\tilde{v}_i\in\{0,1,\ldots,d-1\}$ denotes the unique integer 
  such that $\tilde{v}_i\equiv v_i\bmod d$.
  Let
  \begin{align*}
             I(V, W) &:= \Z \cap \bigcup_{i=1}^n\left\{d - \frac{\tilde{v}_i}{w_i}, d -\frac{\tilde{v}_i + d}{w_i},\ldots, d - \frac{\tilde{v}_i + (w_i-1)d}{w_i}\right\}.
           \end{align*}
           If $j$ is an integer such that $0 \leq j \leq w_i - 1$, then 
           
           \[\frac{\tilde{v}_i+dj}{w_i} < \frac{d+d(w_i-1)}{w_i} =d,\]
and hence $I(V, W)$ is a subset of $\{0, 1, \ldots, d-1\}$.
           Let $P'(V, W)$ be the differential operator \eqref{theorem:picard-fuchs.eq2}.
           If $k\in I(V, W)$, then
           $P'(V, W)$ is factorized by $(D_\lambda -k)$ from left. Let $P(V, W)$ denote
           the differential equation obtained by removing all such factors from $P'(V, W)$, namely \[P'(V, W) =\prod_{k \in I(V, W)}(D_\lambda -k) P(V, W).\]

           \begin{lemma}\label{lemma:I=J} 
           Assume that $V \in (\Z/dZ)^n_0$ is totally nonzero.
Put
\[J(V, W) := \{r \in\{ 0,1, \ldots, d-1\} \mid \tilde{v}_i + r w_i \equiv 0\bmod d \text{ for some $i$}\}.\]
Then $I(V, W)=J(V,W)$.
           \end{lemma}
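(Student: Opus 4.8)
The plan is to prove the two inclusions $I(V,W)\subseteq J(V,W)$ and $J(V,W)\subseteq I(V,W)$ separately, working one index $i$ at a time, since both sets are expressed as unions over $i$. The bridge between the two descriptions is the elementary identity that, for $j\in\{0,1,\ldots,w_i-1\}$, the $i$-th generator $k_j := d-\frac{\tilde v_i+jd}{w_i}$ of $I(V,W)$ satisfies $\tilde v_i + k_j w_i = d(w_i-j)$. Thus the congruence $\tilde v_i + k_j w_i \equiv 0 \bmod d$ defining membership in $J(V,W)$ holds automatically, and the only real content is to match up the integrality condition $w_i\mid \tilde v_i + jd$ (which makes $k_j$ an integer) with the divisibility $d\mid \tilde v_i + r w_i$ (which defines $J$), together with the correct range of the index.

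For $I(V,W)\subseteq J(V,W)$, I would take an element $k_j$ of $I(V,W)$, that is, a $j\in\{0,\ldots,w_i-1\}$ with $w_i\mid \tilde v_i + jd$. The identity above gives $\tilde v_i + k_j w_i = d(w_i-j)\equiv 0 \bmod d$, and the paragraph preceding the lemma already establishes that $k_j\in\{0,1,\ldots,d-1\}$. Hence $k_j\in J(V,W)$ directly.

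For the reverse inclusion $J(V,W)\subseteq I(V,W)$, I would start from $r\in\{0,\ldots,d-1\}$ with $\tilde v_i + r w_i = dm$ for an integer $m$, and propose the witness $j := w_i - m$. The identity $\tilde v_i + jd = w_i(d-r)$ then shows at once that $\frac{\tilde v_i + jd}{w_i}=d-r$ is an integer and that $d - \frac{\tilde v_i + jd}{w_i} = r$; consequently $r$ is precisely the generator $k_j$ of $I(V,W)$, provided that $j$ lies in the admissible range $\{0,\ldots,w_i-1\}$, equivalently $1\le m\le w_i$.

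The main obstacle, and really the only nontrivial point, is this range check on $m$. Since $V$ is totally nonzero we have $\tilde v_i\in\{1,\ldots,d-1\}$, so $dm = \tilde v_i + r w_i \ge 1$ forces $m\ge 1$. For the upper bound I would combine $r\le d-1$ and $\tilde v_i \le d-1$ to estimate $dm = \tilde v_i + r w_i \le (d-1)(w_i+1) = dw_i + d - w_i - 1 < d(w_i+1)$, whence $m\le w_i$. This confirms $j\in\{0,\ldots,w_i-1\}$ and completes the second inclusion; together the two inclusions give $I(V,W)=J(V,W)$.
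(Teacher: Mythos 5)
Your proof is correct and follows essentially the same route as the paper: the forward inclusion via the identity $\tilde v_i + k_j w_i = d(w_i-j)$, and the reverse inclusion by writing $\tilde v_i + r w_i = dm$ and checking $1\le m\le w_i$ (your $m$ is the paper's $s$, and your witness $j=w_i-m$ is exactly the paper's $w_i-s$). No gaps.
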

           \begin{proof}
             Obviously $I(V, W)\subset J(V, W)$.
             For an element $r \in J(V, W)$, let $s$ be the integer such that $\tilde{v}_i +r w_i =sd$. This satisfies $1 \leq s \leq w_i $ as $sd = \tilde{v}_i +r w_i < d+dw_i  = d(1+w_i )$. 
Hence we have \[r = \frac{-\tilde{v}_i +sd}{w_i} = d - \frac{\tilde{v}_i +(w_i -s)d}{w_i} \in I(V, W).\]
This shows $I(V, W)\supset J(V,W)$.
           \end{proof}

           \begin{theorem}\label{theorem:picard-fuchs-reduced}
             Under the setting of Theorem \ref{theorem:picard-fuchs}, we have
         \[P(V, W)\omega_V =0.\] 
       \end{theorem}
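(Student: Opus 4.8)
The plan is to deduce Theorem \ref{theorem:picard-fuchs-reduced} from the descent statement Theorem \ref{theorem:picard-fuchs-descent} by transporting the vanishing along the covering $[d]$ and then extending it across $\lambda = 0$. First I would work on the open locus $U - \{0\} = \mathbb{G}_m - \mu_d$, where the conjugation used in Proposition \ref{proposition:descent} is available: conjugating an operator $A$ by $\lambda^N$ and substituting $D_\lambda = -dD_t$, $\lambda^d = t^{-1}$ defines a ring homomorphism $\Phi$ from the algebra of operators preserving the eigencomponent into the corresponding algebra in $D_t, t$, and under it $\Phi(P'(V,W))$ is a unit multiple of $Hyp'(V,W,b)$ (this is exactly the computation of Proposition \ref{proposition:descent}). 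Since $\Phi$ is a ring homomorphism, the factorization $P'(V,W) = \prod_{k\in I(V,W)}(D_\lambda - k)\,P(V,W)$ is carried to $\Phi(P'(V,W)) = \prod_{k\in I(V,W)}(-d)\bigl(D_t + \tfrac{N+k}{d}\bigr)\,\Phi(P(V,W))$.

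Next I would match this with the Cancel operation. By Lemma \ref{lemma:I=J} the integers $k\in I(V,W)$ are precisely those for which the numerator parameter $\tfrac{k}{d}+\tfrac{N}{d}$ of $Hyp'(V,W,b)$ coincides with a denominator parameter, i.e. the canceled pairs; and moving the factor $t$ through the product via the relation $t(D_t + \alpha) = (D_t + \alpha - 1)t$ identifies $\prod_{k\in I(V,W)}(-d)\bigl(D_t + \tfrac{N+k}{d}\bigr)$ with the left factors $\prod_{\mathrm{canceled}}(D_t + \beta - 1)$ that are peeled off in the proof of Theorem \ref{theorem:picard-fuchs-descent}, up to a unit. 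Cancelling these identical left factors in the (Ore domain) ring of differential operators then yields $\Phi(P(V,W)) = (\mathrm{unit})\cdot Hyp(V,W,b)$. Consequently, via $\lambda^N\omega_V \leftrightarrow \omega_{V,t}$, the identity $P(V,W)\omega_V = 0$ on $U - \{0\}$ is equivalent to $Hyp(V,W,b)\omega_{V,t} = 0$, which holds by Theorem \ref{theorem:picard-fuchs-descent}.

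Finally I would extend the identity across $\lambda = 0$. The sheaf $\Prim^{n-2}_{\dR}(V\bmod W)$ is a direct summand of the de Rham cohomology of the smooth proper family $\pi$, hence a locally free $\mathcal{O}_U$-module; therefore the global section $P(V,W)\omega_V$, which has just been shown to vanish on the dense open $U - \{0\}$, must vanish on all of $U$ by torsion-freeness.

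I expect the main obstacle to be the middle step: verifying that stripping the factors $(D_\lambda - k)$, $k\in I(V,W)$, from $P'(V,W)$ on the original family corresponds exactly to the Cancel operation on the descent side. This requires the commutation bookkeeping with $t$ together with the precise matching of parameters furnished by Lemmas \ref{lemma:I=J} and \ref{lemma:cancel}. The reason for routing through descent rather than peeling factors directly on $U$ is that a direct peeling argument is blocked by the possible reducibility of $\Prim^{n-2}_{\dR}(V\bmod W)$ as a $\mathcal{D}_U$-module; the irreducibility needed to kill intermediate first-order solutions is only available after descent, by Lemma \ref{lemma:irreducibility}.
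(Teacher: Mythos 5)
Your proposal is correct and follows essentially the same route as the paper's proof: conjugate $P'(V,W)$ by $\lambda^N$, pass to the descent coordinate $t=\lambda^{-d}$ where it becomes a unit multiple of $Hyp'(V,W,b)$, match the stripped factors $(D_\lambda-k)$, $k\in I(V,W)$, with the Cancel operation via Lemma \ref{lemma:I=J}, and invoke Theorem \ref{theorem:picard-fuchs-descent} together with $\omega_{V,t}=\lambda^N\omega_V$. You merely spell out two points the paper leaves implicit --- the commutation bookkeeping with $t$ that aligns the peeled left factors with the canceled parameters, and the extension of the resulting identity from $U-\{0\}$ across $\lambda=0$ by local freeness --- both of which are correct and harmless.
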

       \begin{proof}
  We notice that 
  \[
\lambda^{N} P'(V,W)\lambda^{-N}=\text{(constant)}\times  Hyp'(V, W, b),
  \]and
    the cancel operation for $Hyp'(V, W, b)$ corresponds to the cancel
  operation for $P'(V, W)$ by Lemma \ref{lemma:I=J}. 
    Therefore
    the assertion follows from Theorem \ref{theorem:picard-fuchs-descent}
    together with the fact that $\omega_{V,t}=\lambda^N\omega_V$.
       \end{proof}

\subsection{$\mathcal{D}$-module structure of $\Prim^{n-2}_{\dR}$.}

Let $\mathcal D$ be the ring of differential operators on $K[t,(t-t^2)^{-1}]$.
\begin{lemma}\label{lemma:RankIrr}
  Assume that $V$ is totally nonzero.
  \begin{enumerate}
  \item[\rm(1)] The order of $Hyp(V, W, b)$ coincides with the rank of 
   $\Prim^{n-2}_{\dR, \text{\rm desc}}(V\bmod W)$.
   \item[\rm(2)]  The $\mathcal{D}$-module $\mathcal{D}/\mathcal{D}Hyp(V, W, b)$ is irreducible.
  \end{enumerate}
\end{lemma}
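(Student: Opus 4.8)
The plan is to prove the two statements of Lemma \ref{lemma:RankIrr} by transporting them from the $\ell$-adic / complex-analytic setting, where Katz's results give strong control, to the purely algebraic statement about the hypergeometric operator. For part (1), I would first recall that by Theorem \ref{theorem:picard-fuchs-descent} the section $\omega_{V,t}$ is annihilated by $Hyp(V,W,b)$, so the cyclic $\mathcal D$-module generated by $\omega_{V,t}$ is a quotient of $\mathcal D/\mathcal D\, Hyp(V,W,b)$, whose order I will compute directly. The order of $Hyp(V,W,b)$ is the number of parameters surviving the cancel operation; by Lemma \ref{lemma:cancel} and Lemma \ref{lemma:I=J} this equals $d - \#J(V,W) = d - \#\{r \in \{0,\ldots,d-1\} \mid V + rW \text{ is } \emph{not}\text{ totally nonzero}\}$, which is exactly $\#\{r \mid V+rW \text{ is totally nonzero}\}$. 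On the other side, Lemma \ref{lemma:rank} identifies this last count with the rank of $\Prim^{n-2}(V \bmod W)_{\overline\Q_\ell}$, and by the comparison between $\ell$-adic and de Rham cohomology (which share the same rank) this is the rank of $\Prim^{n-2}_{\dR,\text{desc}}(V \bmod W)$.

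To finish part (1) I need the cyclic module generated by $\omega_{V,t}$ to be \emph{all} of $\Prim^{n-2}_{\dR,\text{desc}}(V\bmod W)$, not merely a submodule, and then that the annihilator of $\omega_{V,t}$ is generated precisely by $Hyp(V,W,b)$ rather than a proper left divisor. Here I would invoke Lemma \ref{lemma:irreducibility}: the $\mathcal D$-module $\Prim^{n-2}_{\dR,\text{desc}}(V\bmod W)$ is irreducible, so any nonzero section generates the whole module, giving the reverse inequality $\mathrm{rank} \le \mathrm{ord}\, Hyp(V,W,b)$. Combined with the quotient relation above (which gives $\mathrm{rank} \le \mathrm{ord}$ is automatic and $\mathrm{ord} \ge$ the rank computation), the two ranks coincide, and consequently the surjection $\mathcal D/\mathcal D\, Hyp(V,W,b) \twoheadrightarrow \Prim^{n-2}_{\dR,\text{desc}}(V\bmod W)$ between modules of equal (finite) rank over a simple ring must be an isomorphism.

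For part (2), the irreducibility of $\mathcal D/\mathcal D\, Hyp(V,W,b)$ then follows immediately: I have just identified it with $\Prim^{n-2}_{\dR,\text{desc}}(V\bmod W)$, which is irreducible by Lemma \ref{lemma:irreducibility}. Alternatively, and perhaps more self-containedly, I could argue directly that a hypergeometric $\mathcal D$-module of type $(m,m)$ is irreducible precisely when no upper parameter $\alpha_i$ is congruent modulo $\Z$ to any lower parameter $\beta_j$ (Katz's criterion, \cite[3.2]{esde}), and then verify that after the cancel operation the surviving parameters of $Hyp(V,W,b)$ satisfy this disjointness condition. The latter check is essentially the content of Lemma \ref{lemma:cancel}, which guarantees that congruent pairs are in fact \emph{equal} and hence all removed by cancellation.

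The main obstacle I anticipate is the comparison step linking the algebraic de Rham module $\Prim^{n-2}_{\dR,\text{desc}}(V\bmod W)$ over $K[t,(t-t^2)^{-1}]$ to the complex-analytic local system of Lemma \ref{lemma:irreducibility}, since the irreducibility statement I want to use is phrased analytically, whereas the operator $Hyp(V,W,b)$ lives algebraically. I would handle this by base-changing along a fixed embedding $K \hookrightarrow \C$ and using that irreducibility of a regular holonomic $\mathcal D$-module is detected on the associated analytic local system via the Riemann–Hilbert correspondence, together with the fact that passing to $\C$ neither changes the rank nor the annihilating operator. Care is needed to ensure that $\omega_{V,t}$ remains a nonzero (hence cyclic generator) section after this base change, which follows because its residue presentation is defined over $K$ and the de Rham cohomology is free of the expected rank.
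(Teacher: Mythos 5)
Your proposal is correct, and the two computations that actually prove the lemma are the ones the paper uses: for (1), the chain $\mathrm{ord}\, Hyp(V,W,b) = d - \#I(V,W) = d - \#J(V,W) = \#\{r \mid V+rW \text{ totally nonzero}\} = \mathrm{rank}\,\Prim^{n-2}_{\dR,\text{\rm desc}}(V\bmod W)$ via Lemma \ref{lemma:I=J} and Lemma \ref{lemma:rank} (plus the $\ell$-adic/de Rham rank comparison, which the paper also uses silently); for (2), your ``alternative'' argument via Katz's criterion \cite[Corollary 3.2.1]{esde} together with Lemma \ref{lemma:cancel} is exactly the paper's proof. Where you diverge is that you wrap these in a heavier framework: you establish the surjection $\mathcal{D}/\mathcal{D}Hyp(V,W,b) \twoheadrightarrow \Prim^{n-2}_{\dR,\text{\rm desc}}(V\bmod W)$ using irreducibility of the de Rham eigencomponent (Lemma \ref{lemma:irreducibility}, which requires the analytic/algebraic comparison you rightly flag as delicate), and you make the resulting isomorphism your primary route to (2). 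The paper deliberately avoids this: the lemma is proved by pure counting and a purely algebraic irreducibility criterion, and the isomorphism you construct is deferred to Theorem \ref{theorem:iso}, whose proof \emph{uses} this lemma (irreducibility of $\mathcal{H}_{V,W,b}$ gives injectivity, the rank equality gives surjectivity) rather than the other way around. Your self-contained version is not circular, but it buys nothing for this lemma and imports the Riemann--Hilbert comparison unnecessarily; the counting argument and Katz's criterion already suffice. One small caution if you keep your route: you should justify that $\omega_{V,t}\neq 0$ (via Griffiths' description of the Jacobian ring) before claiming it generates the irreducible module, and note that the cancel operation must be taken maximal for Katz's criterion to apply to the surviving parameters.
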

\begin{proof}
    \begin{enumerate}[(1)]
  \item 
    By Lemma \ref{lemma:rank} and Lemma \ref{lemma:I=J}, we have
    \begin{align*}
      \mathrm{rank}\  \Prim^{n-2}_{\dR, \text{\rm desc}}(V\bmod W) &=\mathrm{rank}\  \Prim^{n-2}_{\dR}(V\bmod W)\\
                                                     &=  d-\#J(V, W)\\ &= d-\#I(V, W) \\ &= \text{The order of } P(V, W) \\
      &= \text{The order of } Hyp(V, W, b)
    \end{align*}
    as required.
  \item
    In general, the hypergeometric $\mathcal{D}$-module $\mathcal{D}/\mathcal{D}Hyp(\alpha_1, \ldots, \alpha_n; \beta_1, \ldots, \beta_n; t)$ is irreducible if and only if $\alpha_i \not\equiv \beta_j \bmod \Z $ for every $i, j$ \cite[Corollary 3.2.1]{esde}.
Hence $\mathcal{D}/\mathcal{D}Hyp(V, W, b)$ is irreducible by the definition of the cancel operation.
\end{enumerate}
\end{proof}

The following theorem is a generalization of Theorem 8.4 and Corollary 8.5 of \cite{katz}.
\begin{theorem}\label{theorem:iso}
  Assume that $V \in (\Z/d\Z)^n_0$ is totally nonzero and let
  \[\mathcal{H}_{V, W, b} := \mathcal{D}_{\mathbb{G}_m-\{1\}}/\mathcal{D}_{\mathbb{G}_m-\{1\}}Hyp(V, W, b).\]
  Then there exist isomorphisms of $\mathcal{D}$-modules
  \begin{align*}
  \Prim^{n-2}_{\dR, \text{\rm desc}}(V \bmod W) &\cong \mathcal{H}_{V, W, b},\\
  \Prim^{n-2}_{\dR}(V \bmod W)|_{U-\{0\}} &\cong [d]^*\mathcal{H}_{V, W, b},\\
  \Prim^{n-2}_{\dR}(V \bmod W) &\cong  j_{!*}[d]^*\mathcal{H}_{V, W, b}.
  \end{align*}
  where $j_{!*}$ is the minimal extension by the inclusion $j:U-\{0\} \rightarrow U$. 
\end{theorem}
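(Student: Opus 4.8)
The plan is to prove the three isomorphisms in sequence, each building on the previous. The first isomorphism $\Prim^{n-2}_{\dR, \text{\rm desc}}(V \bmod W) \cong \mathcal{H}_{V, W, b}$ is the heart of the argument, and everything else descends from it or is pulled back from it via standard operations.

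For the first isomorphism, I would argue as follows. By Theorem \ref{theorem:picard-fuchs-descent}, the section $\omega_{V, t}$ is annihilated by $Hyp(V, W, b)$, so the cyclic $\mathcal{D}$-module it generates is a quotient of $\mathcal{H}_{V, W, b}$; equivalently there is a surjection $\mathcal{H}_{V, W, b} \twoheadrightarrow \langle \omega_{V,t}\rangle \hookrightarrow \Prim^{n-2}_{\dR, \text{\rm desc}}(V \bmod W)$. By Lemma \ref{lemma:RankIrr}(2), $\mathcal{H}_{V, W, b}$ is irreducible, so this map is either zero or injective; since $\omega_{V,t}\neq 0$ it is injective. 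Now I invoke Lemma \ref{lemma:RankIrr}(1): the order of $Hyp(V, W, b)$, which is the rank (generic dimension) of $\mathcal{H}_{V, W, b}$, equals the rank of $\Prim^{n-2}_{\dR, \text{\rm desc}}(V \bmod W)$. An injection of $\mathcal{D}$-modules of equal generic rank between an irreducible module and an irreducible module (the target is irreducible by Lemma \ref{lemma:irreducibility}) must be an isomorphism. Thus $\mathcal{H}_{V, W, b}\cong \Prim^{n-2}_{\dR, \text{\rm desc}}(V \bmod W)$.

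For the second isomorphism, I would use that $\Prim^{n-2}_{\dR}(V \bmod W)|_{U-\{0\}}$ is, by construction of the descent family, the pullback of $\Prim^{n-2}_{\dR, \text{\rm desc}}(V \bmod W)$ along $[d]\colon \mathbb{G}_m - \mu_d \to \mathbb{G}_m - \{1\}$. Since $[d]$ is finite \'etale, pullback is exact and commutes with the Gauss-Manin connection, so applying $[d]^*$ to the first isomorphism yields $\Prim^{n-2}_{\dR}(V \bmod W)|_{U-\{0\}} \cong [d]^*\mathcal{H}_{V, W, b}$. Concretely the horizontal section $\omega_{V,t}=\lambda^N\omega_V$ from Proposition \ref{proposition:descent} makes the descent explicit, so one checks the isomorphism is induced by the natural identification of $\omega_V$ with a pullback of $\omega_{V,t}$.

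For the third isomorphism I would extend across the puncture $\lambda = 0$. The module $\Prim^{n-2}_{\dR}(V \bmod W)$ on all of $U$ restricts over $U-\{0\}$ to $[d]^*\mathcal{H}_{V, W, b}$, which is irreducible (being a finite \'etale pullback of an irreducible hypergeometric $\mathcal{D}$-module, using Lemma \ref{lemma:irreducibility}), hence has no nonzero sub- or quotient-module supported at $\{0\}$; that is precisely the characterization of the minimal (intermediate) extension $j_{!*}$. So I would show $\Prim^{n-2}_{\dR}(V \bmod W)$ is a $\mathcal{D}_U$-module with no subobject or quotient supported at $0$, which forces $\Prim^{n-2}_{\dR}(V \bmod W)\cong j_{!*}[d]^*\mathcal{H}_{V, W, b}$. \textbf{The main obstacle} I anticipate is this last step: verifying the absence of punctual sub/quotient modules at $\lambda=0$, i.e. that $\Prim^{n-2}_{\dR}(V \bmod W)$ genuinely coincides with the intermediate extension rather than $j_!$ or $j_*$. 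This requires a local analysis at $\lambda = 0$ of $H^{n-2}_{\dR}(\mathbb{X}/U)$ — for instance checking that the Gauss-Manin connection has a regular singular point there with no extra constant subquotient, or appealing to the fact that $\Prim^{n-2}_{\dR}$ arises as a cohomology sheaf of a smooth projective morphism and is therefore already a minimal/pure object. The first two isomorphisms, by contrast, are formal consequences of the rank and irreducibility lemmas already established.
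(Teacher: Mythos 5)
Your argument for the first isomorphism is exactly the paper's: the cyclic map $P\mapsto P\omega_{V,t}$ is well defined by Theorem \ref{theorem:picard-fuchs-descent}, injective by irreducibility of $\mathcal{H}_{V,W,b}$ (Lemma \ref{lemma:RankIrr}), and surjective by the rank equality; the paper then simply asserts that the second and third isomorphisms ``can be derived from it,'' so your elaborations (pullback along the finite \'etale $[d]$ using $\omega_{V,t}=\lambda^N\omega_V$, and the punctual-sub/quotient characterization of $j_{!*}$) fill in details the paper omits rather than diverging from it. One small caution: a finite \'etale pullback of an irreducible $\mathcal{D}$-module need not stay irreducible, so your parenthetical justification there is not automatic --- but your actual criterion for the minimal extension (no subobject or quotient supported at $\lambda=0$) is the right one, and you correctly flag that verifying it is the genuine remaining work, which the paper itself does not carry out.
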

\begin{proof}
It is enough to show the first isomorphism, because
the 2nd and 3rd isomorphisms can be derived from it. Consider a morphism
    \begin{align*}
    \mathcal{H}_{V, W, b} &\longrightarrow \Prim^{n-2}_{\dR, \text{\rm desc}}(V\bmod W).\\
    P &\longmapsto P\omega_{V,t}
    \end{align*}
    Since $\mathcal{H}_{V, W, b}$ is irreducible, this morphism is injective.
Moreover we know that both sides are locally free $\mathcal{O}_{\mathbb{G}_m-\{1\}}$-modules with the same rank by Lemma $\ref{lemma:RankIrr}$, hence the morphism is surjective.
\end{proof}

\section{Application to $p$-adic cohomology theory}

\subsection{Differential modules and Matrix calculus}
We mean by a differential ring a commutative ring $R$ equipped with an additive map $d: R \rightarrow R$ satisfying the Leibniz rule
  \[d(ab) = ad(b) + bd(a),\quad (a,b\in R).\]
The map $d$ is called a derivation.
For a matrix $A = (a_{ij})$ with $a_{ij}\in R$, we write
$d(A) = (d(a_{ij}))$.
  A differential module over a differential ring $(R, d)$ is a $R$-module $M$ equipped with an additive map $D : M \rightarrow M$ satisfying
  \[D(am) = aD(m) + d(a)m,\quad(a\in R,\, m\in M).\]
The map $D$ is called a differential operator on $M$.

\medskip

  Let $(M, D)$ be a finite free differential module of rank $n $ over a differential ring $(R, d)$. Let $e_1, \ldots, e_n$ be a basis of $M$. Define the matrix of action of $D$ 
  with respect to the basis $e_1, \ldots, e_n$ to be the $n \times n$ matrix $C = (c_{ij})$ with
  $c_{ij}\in R$
  given by 
  \[D(e_j) = \sum^n_{i=1}c_{ij}e_i.\]

\begin{proposition}\label{matrixlemma}
  Let $(M, D)$ be a finite free differential module of rank $n $ over a differential ring $(R, d)$.
  \begin{enumerate}[(1)]
  \item[\rm(1)]   For an invertible matrix $A = (a_{ij})$ with $a_{ij}\in R$, we have
  \begin{equation}
    \label{eq:derivationofinvertiblematrix}
    d(A^{-1}) = -A^{-1} d(A)A^{-1}.
  \end{equation}
\item[\rm(2)] Let $C, \widetilde{C}$ be the matrices of the action of $D$ with respect to
bases $e_1, \ldots, e_n$ and $\tilde{e}_1, \ldots, \tilde{e}_n$ of $M$ respectively. 
Then
    \begin{equation}
      \label{eq:basechangeofconnection}
      \tilde{C} = B^{-1}CB + B^{-1}d(B),
\end{equation}
  where $B$ is 
  the change-of-basis matrix from $e_1, \ldots, e_n$ to $\tilde{e}_1, \ldots, \tilde{e}_n$.
  \item[\rm(3)]
  Let $C,\widetilde C$ and $B$ be as above.
  Then
  \[
  d(X)-XC=O\iff d(BX)-(BX)\widetilde C=O.
  \]
  \end{enumerate}
\end{proposition}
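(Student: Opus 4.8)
\emph{Approach.} The plan is to establish both implications of the biconditional at once through a single direct computation, rather than by exhibiting a map between solution spaces. The idea is to feed the transformation rule for $\widetilde{C}$ provided by part (2) into the expression $d(BX)-(BX)\widetilde{C}$, expand everything by the Leibniz rule, and reorganize the result into a fixed invertible left multiple of $d(X)-XC$. Concretely, the target is an identity of the shape $d(BX)-(BX)\widetilde{C}=B\bigl(d(X)-XC\bigr)$, valid for \emph{every} $X$ (not only for solutions). Granting such an identity, the equivalence is immediate: since $B$ is invertible, the left-hand side is the zero matrix $O$ if and only if $d(X)-XC=O$, and the reverse implication is recovered by multiplying the factored identity on the left by $B^{-1}$.

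\emph{Steps.} First I would record the two ingredients I am permitted to invoke: the transformation rule $\widetilde{C}=B^{-1}CB+B^{-1}d(B)$ from part (2), and, in case I need to differentiate an inverse along the way, the identity $d(B^{-1})=-B^{-1}d(B)B^{-1}$ from part (1). Next I expand the derivative by Leibniz, $d(BX)=d(B)\,X+B\,d(X)$, and separately expand the connection term, $(BX)\widetilde{C}=BXB^{-1}CB+BXB^{-1}d(B)$, using the substituted value of $\widetilde{C}$. I then subtract and collect like terms. The plan is to pair the term $d(B)\,X$ arising from $d(BX)$ against the $d(B)$-term arising from $(BX)\widetilde{C}$ so that they cancel, and to verify that the surviving pieces reassemble as $B\,d(X)$ minus a single connection contribution $B\,X\,C$, leaving precisely $B\bigl(d(X)-XC\bigr)$.

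\emph{Main obstacle.} The delicate point is the non-commutativity of matrix multiplication. Substituting $\widetilde{C}$ inserts the block $BXB^{-1}$, so \emph{a priori} the connection contribution appears as $BXB^{-1}\!\cdot CB$ and the basis-derivative contribution as $BXB^{-1}\!\cdot d(B)$, i.e. conjugated by $B$ rather than in the bare form one needs. The crux of the argument is therefore purely the bookkeeping of the order of the factors $B$, $B^{-1}$, $X$, $C$: one must check that the $B^{-1}$ produced by $\widetilde{C}$ is absorbed in the correct position, that the two $d(B)$ contributions genuinely cancel, and that the remaining terms collapse to a clean left factor of $B$ times $d(X)-XC$. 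This reordering is the only nontrivial part of the proof; once it is carried out, invertibility of $B$ delivers both directions of the equivalence simultaneously, and parts (1) and (2) are exactly the tools that make the bookkeeping close.
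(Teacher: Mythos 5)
Your strategy---prove a single identity of the form ``$d(\cdot)-(\cdot)\widetilde C$ equals an invertible multiple of $d(X)-XC$, valid for all $X$,'' then invoke invertibility of $B$---is the right one, but the identity you aim for is false, so the bookkeeping you defer cannot close. Test it at $X=I$: using part (2),
\[
d(B\cdot I)-(B\cdot I)\widetilde C \;=\; d(B)-B\bigl(B^{-1}CB+B^{-1}d(B)\bigr)\;=\;d(B)-CB-d(B)\;=\;-CB,
\]
whereas $B\bigl(d(I)-IC\bigr)=-BC$. These differ whenever $B$ and $C$ do not commute, so no reordering of factors can produce the left factorization $d(BX)-(BX)\widetilde C=B\bigl(d(X)-XC\bigr)$. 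The obstruction is exactly the one you flagged and hoped would resolve: after substituting $\widetilde C$, the connection term is $BXB^{-1}\cdot CB$ and the basis-derivative term is $BXB^{-1}\cdot d(B)$, and neither reduces to $BXC$ resp.\ $d(B)X$ unless $X$ commutes with the relevant conjugates.

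The repair is to put the basis-change matrix on the other side of $X$. With the convention forced by part (2) (namely $\tilde e_j=\sum_i b_{ij}e_i$), solution matrices of $d(X)=XC$ transform by \emph{right} multiplication, and your one-identity plan then works verbatim:
\[
d(XB)-(XB)\widetilde C \;=\; d(X)B+Xd(B)-XB\bigl(B^{-1}CB+B^{-1}d(B)\bigr)\;=\;\bigl(d(X)-XC\bigr)B,
\]
where the two $d(B)$-terms now cancel on the nose; invertibility of $B$ gives both directions at once, and part (1) is never actually needed. This right-multiplication form is also what the paper itself uses: in \S 4.3 the deformation matrix for the new basis is $W(\lambda)_{V_3}\,B(\lambda)$, i.e.\ $XB$, not $BX$. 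So the displayed $BX$ in statement (3) should be read as $XB$; as literally printed the equivalence is false (take $C=O$ and $X$ a constant invertible matrix with $d(B)X\neq BXB^{-1}d(B)$), and your proof attempt inherits that falsity rather than detecting it.
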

\begin{proof}
Straightforward.
\end{proof}

\subsection{Deformation matrix of Dwork families}
We compute the Frobenius matrix on the Dwork families
by the deformation method.
We refer the reader to \cite{gerkmann}, \cite{kedlaya} 
for general references of deformation method.

\bigskip

Let $p>0$ be a prime number. 
 Let $W=W(\overline\F_p)$ be the ring of Witt vectors
 of the algebraic closure $\overline\F_p$, and $K$ the fractional field.
Let $W[X_1, \ldots, X_m]^\dagger$ be the ring of power series
$\sum a_\alpha X^\alpha \in W[[X_1, \ldots, X_m]]$ 
such that $|a_\alpha|r^{|\alpha|} \rightarrow 0$
as  $|\alpha| \rightarrow \infty$ for some $r >1$.
  For a $\Z_q$-algebra $B$ of finite type, 
  the weak completion $B^\dag$ is defined to be $
    W[X_1, \ldots, X_m]^\dagger/I  W[X_1, \ldots, X_m]^\dagger$ for a
presentation $B \cong W[X_1, \ldots, X_m]/I$. We write $B^\dag_K:=
B^\dag\otimes\Q$.
 
 \medskip
 
Fix integers $d \geq n \geq 3$ and $w_1, \ldots, w_n\geq 1$ such that the condition (\ref{eq:conditions}) is satisfied and assume that
$d, w_1, \ldots, w_n$ are prime to $p$. Consider the generalized Dwork family $\mathbb{X} \rightarrow U =\spec W[\lambda, (1-\lambda^d)^{-1}] $
defined by the equation \eqref{eq:generalized}, and 
the family of complements $(\PP - \mathbb{X})/U$.
Put $B = \Gamma(U, \mathcal{O}_U)$. Let
$\sigma: B^\dagger \rightarrow B^\dagger$ be the 
continuous $F_W$-linear ring homomorphism such that $\lambda\mapsto\lambda^p$
where $F_W$ is the $p$-th Frobenius on $W$.
Let $H_{\rig}^*((\PP-\mathbb{X})_{\overline\F_p}/U_{\overline\F_p})$ be the rigid cohomology group.
This is a $B^\dagger_{K}$-module, and it is
endowed the following structure,
\begin{itemize}
\item the integrable connection 
\[\nabla^{\rig}:H_{\rig}^i((\PP-\mathbb{X})_{\overline\F_p}/U_{\overline\F_p}) \rightarrow H_{\rig}^i((\PP-\mathbb{X})_{\overline\F_p}/U_{\overline\F_p}) \otimes \Omega^1_{B^\dagger_{K}}\]
induced from the Gauss-Manin connection
under the comparison
\begin{equation}\label{rig.dR}
H_{\rig}^i((\PP-\mathbb{X})_{\overline\F_p}/U_{\overline\F_p})\cong 
B^\dag_{K}\otimes_BH^i_\dR((\PP-\mathbb{X})_{K}/U_{K})
\end{equation}
with the de Rham cohomology group,
\item an isomorphism  
\begin{equation}\label{isogeny}
\Phi: \sigma^*H_{\rig}^i((\PP-\mathbb{X})_{\overline\F_p}/U_{\overline\F_p}) \xrightarrow{\sim} H_{\rig}^i((\PP-\mathbb{X})_{\overline\F_p}/U_{\overline\F_p})
\end{equation}
of $B^\dag_{K}$-modules which commutes with $\nabla^{\rig}$.
\end{itemize}

Let $\omega_1, \ldots, \omega_r$ be a basis of $H^{n-1}_{\rig}((\PP-\mathbb{X})_{\overline\F_p}/U_{\overline\F_p})$ . Let $C(\lambda) = (c_{ij}(\lambda))$ be the matrix of the action of $\frac{d}{d\lambda} := \nabla^{\rig}_{d/d\lambda}$,
and let $F(\lambda)=(f_{ij}(\lambda))$ be the matrix of the action of $\Phi$,
\[
\frac{d}{d\lambda} \omega_j = \sum^r_{i=1}c_{ij}(\lambda)\omega_i, \quad
\Phi\omega_j = \sum^r_{i=1}f_{ij}(\lambda)\omega_i, 
\] 
The commutativity of $\nabla^{\rig}$ and $\Phi$ means that $F(\lambda)$
satisfies a differential equation
\begin{equation}
  \label{eq:commutativity-differentialeq}
   \frac{d}{d\lambda}F(\lambda) +C(\lambda)F(\lambda) = 
   p\lambda^{p-1}F(\lambda)C(\lambda^p).
 \end{equation}
Let $A(\lambda)$ be the unique solution of the differential equation
\begin{equation}
  \label{eq:deformation_matrix}
  \frac{d}{d\lambda}A(\lambda) =  A(\lambda)C(\lambda)
\end{equation}
with the initial condition
\begin{equation}
  \label{eq:initialcondition}
  A(0) = I.
\end{equation}
The matrix $A(\lambda)$ is called the deformation matrix 
with respect to the basis $\omega_1,\ldots,\omega_r$.
Straightforward calculation shows that
$X(\lambda) := A(\lambda)F(\lambda)A(\lambda^p)^{-1}$ satisfies
$\frac{d}{d\lambda}X(\lambda)=O$, so that $X(\lambda)$ is a constant matrix, namely
$X=F(0)$,
\begin{equation}
  \label{eq:deformation_matrix2}
F(\lambda) = A(\lambda)^{-1}F(0)A(\lambda^p).
\end{equation}

\medskip

Recall the decomposition
\[
H^{n-1}_\dR((\PP-\mathbb{X})_{K}/U_{K})\cong
\Prim^{n-2}_{\dR} = \bigoplus_{V} 
\Prim^{n-2}_{\dR}(V\bmod W)
\]
from \S \ref{Picard-Fuchs.sect}.
Suppose that
$\Prim^{n-2}_{\dR}(V\bmod W)$ 
has a basis of 
cyclic vectors $\omega_V, \frac{d}{d\lambda}\omega_V, \ldots, \frac{d^{r-1}}{d\lambda^{r-1}}\omega_V$.
Let $C(\lambda)_V$ denote the matrix of the action of $\frac{d}{d\lambda}$
on $\Prim^{n-2}(V\bmod W)_{\dR}$, which
is of the form
\[C(\lambda)_V =
  \left(\begin{matrix}
          0 &  \cdots & 0 & -c_{0}(\lambda)\\
          1 &\cdots & 0 & -c_{1}(\lambda)\\
          \vdots & \ddots & \vdots & \vdots\\
          0& \cdots & 1 & -c_{r-1}(\lambda)
  \end{matrix}\right)
\]
where $c_1(\lambda), \ldots, c_r(\lambda)$ are the coefficients of the Picard-Fuchs equation
\begin{equation}
\label{eq:PF.matrix2}
\frac{d^r}{d\lambda^r}\omega_V+c_{r-1}(\lambda)\frac{d^{r-1}}{d\lambda^{r-1}}\omega_V 
+ \cdots +c_0(\lambda)\omega_V = 0.
\end{equation}
Moreover the deformation matrix $A(\lambda)_V$
with respect to 
the basis $\{\frac{d^i}{d\lambda^i}\omega_V\}_i$ is given as follows,
\begin{equation}
\label{eq:deformation_matrixV}
  \frac{d}{d\lambda}A(\lambda)_V =  A(\lambda)_VC(\lambda)_V.
\end{equation}
Let $\{w_0(\lambda)_V, \ldots, w_{r-1}(\lambda)_V\}$  be a fundamental set of solutions of 
the Picard-Fuchs equation \eqref{eq:PF.matrix2}
around $\lambda = 0$. Then the Wronskian matrix 
\begin{equation}\label{Wronskian matrix}
W(\lambda)_V = 
    \begin{pmatrix}
      w_0(\lambda)_V & \frac{d}{d\lambda}w_0(\lambda)_V & \cdots & \frac{d^{r-1}}{d\lambda^{r-1}}w_0(\lambda)_V\\
      \vdots &\vdots& \cdots &\vdots\\
      w_{r-1}(\lambda)_V & \frac{d}{d\lambda}w_{r-1}(\lambda)_V & \cdots & \frac{d^{r-1}}{d\lambda^{r-1}}w_{r-1}(\lambda)_V
    \end{pmatrix}
  \end{equation}
is a solution of the differential equation \eqref{eq:deformation_matrixV}, so that 
one has
\begin{equation}
  \label{eq:deformation_matrix4}
A(\lambda)_V=W(0)_V^{-1}W(\lambda)_V
\end{equation}
(note that the matrix $W(0)_V$ is invertible as $w_0(\lambda)_V,\ldots,w_{r-1}(\lambda)_V$ are
linearly independent over $K$). 

\medskip

Put $V^{(1)}=(p^{-1}v_1, \ldots, p^{-1}v_n) \in (\Z/d\Z)^n_0$.
Since the Frobenius $\Phi$ commutes with the action of $\Gamma_W/\Delta$,
we have 
\[
\Phi(\Prim^{n-2}_{\dR}(V^{(1)}\bmod W))
\subset B^\dag_{K}\otimes \Prim^{n-2}_{\dR}(V\bmod W).
\]
Thanks to the isomorphism \eqref{isogeny},
$\Prim^{n-2}_{\dR}(V^{(1)}\bmod W)$ 
also has a basis of cyclic vectors $\{\frac{d^i}{d\lambda^i}\omega_{V^{(1)}}\}_i$.
Let $F(\lambda)_V$ be the matrix of $\Phi$ with respect to 
the bases $\{\frac{d^i}{d\lambda^i}\omega_{V^{(1)}}\}_i$
and $\{\frac{d^j}{d\lambda^j}\omega_{V}\}_j$. 
It follows from \eqref{eq:deformation_matrix2} that we have
\begin{equation}
  \label{eq:deformation_matrix3}
F(\lambda)_V = A(\lambda)^{-1}_VF(0)_VA(\lambda^p)_{V^{(1)}}.
\end{equation}

\medskip

Suppose that one can take $\omega_V$ 
to be the differential form \eqref{theorem:picard-fuchs.eq1} in the above setting.
Then the Picard-Fuchs equation is given by the hypergeometric 
equation $P(V,W)$ 
by Theorem \ref{theorem:picard-fuchs-reduced}.
Write
\[
    P(V, W)= \prod_{i=0}^{r-1}(D_\lambda-k_i)-\lambda^d
    \prod_{j=0}^{r-1}(D_\lambda+\alpha_j),
\]
where we note that $k_0,\ldots,k_r$ are pairwise distinct integers such that $0\leq k_i<d$
and $\alpha_j\in\{(\tilde v_k+ld)/w_k\mid 1\leq k\leq n,\, 0\leq l<w_k\}$. 
We can obtain a fundamental set $\{w_i(\lambda)_V\}_i$
of solutions by the hypergeometric series (cf. \cite[16.8.6]{nist}),
namely we set
\begin{equation}\label{eq:w-series}
w_i(\lambda)_V:=\lambda^{k_i}{}_rF_{r-1}\left({\frac{\alpha_0+k_i}{d},\ldots,\frac{\alpha_{r-1}+k_i}{d}
\atop 1+\frac{-k_0+k_i}{d},\ldots,\widehat{1+\frac{-k_i+k_i}{d}},\ldots,1+\frac{-k_{r-1}+k_i}{d}}
;\lambda^d\right)
\end{equation}
for $i=0,1,\ldots,r-1$, and let $W(\lambda)_V$ be defined by \eqref{Wronskian matrix}.
Then the deformation matrix $A(\lambda)_V$ is given by \eqref{eq:deformation_matrix4}.

\medskip

Summing up the above, we have the following theorem.
\begin{theorem}\label{4.2.theorem}
Let $\omega_V$
be the differential form \eqref{theorem:picard-fuchs.eq1}.
Let $\{w_i(\lambda)_V\}_i$ be the hypergeometric series \eqref{eq:w-series}.
Suppose that
$\Prim^{n-2}_{\dR}(V\bmod W)$ 
has a basis  
$\{\frac{d^i}{d\lambda^i}\omega_V\}_i$.
Then the deformation matrix $A(\lambda)_V$ 
satisfies
\[
A(\lambda)_V=W(0)_V^{-1}W(\lambda)_V
\]
where $W(\lambda)_V$ is the Wronskian matrix \eqref{Wronskian matrix}.
Let $\{\omega_{V^{(1)}}\}_i$ and $\{w_i(\lambda)_{V^{(1)}}\}_i$ be defined in the same way
from $V^{(1)}$.
Then the same thing holds for 
$A(\lambda)_{V^{(1)}}$.
Let $F(\lambda)_V$ be the matrix of the Frobenius
\[
\Phi:
B^\dag_{K}\otimes\Prim^{n-2}_{\dR}(V^{(1)}\bmod W)\longrightarrow
 B^\dag_{K}\otimes \Prim^{n-2}_{\dR}(V\bmod W).
\]
with respect to 
the bases $\{\frac{d^i}{d\lambda^i}\omega_{V^{(1)}}\}_i$
and $\{\frac{d^j}{d\lambda^j}\omega_{V}\}_j$. 
Then it satisfies
\[
F(\lambda)_V=A(\lambda)^{-1}_VF(0)_VA(\lambda^p)_{V^{(1)}}.\]
\end{theorem}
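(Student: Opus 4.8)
The statement collects the formulas assembled in the discussion preceding it, so the plan is to verify the three asserted identities in turn, checking along the way that the cyclic basis $\{\frac{d^i}{d\lambda^i}\omega_V\}_i$ does meet the hypotheses of the general deformation formalism. First I would record the shape of the connection matrix: under the standing assumption that $\{\frac{d^i}{d\lambda^i}\omega_V\}_i$ is a basis, the derivation $\frac{d}{d\lambda}$ sends $\frac{d^i}{d\lambda^i}\omega_V\mapsto\frac{d^{i+1}}{d\lambda^{i+1}}\omega_V$ for $i<r-1$, while Theorem \ref{theorem:picard-fuchs-reduced} gives $P(V,W)\omega_V=0$ and hence expresses $\frac{d^r}{d\lambda^r}\omega_V$ as a combination of the lower derivatives. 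Reading off the matrix of the action then produces exactly the companion matrix $C(\lambda)_V$, whose last column is $(-c_0,\ldots,-c_{r-1})^{\mathrm T}$ with the $c_k$ the coefficients of the normalized equation \eqref{eq:PF.matrix2}.

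Next I would verify the key linear-algebra identity $\frac{d}{d\lambda}W(\lambda)_V=W(\lambda)_V\,C(\lambda)_V$. Writing $(W_V)_{ij}=\frac{d^j}{d\lambda^j}w_i(\lambda)_V$, the first $r-1$ columns of $C(\lambda)_V$ merely shift $\frac{d^j}{d\lambda^j}w_i$ to $\frac{d^{j+1}}{d\lambda^{j+1}}w_i$, whereas the last column reproduces $\frac{d^r}{d\lambda^r}w_i=-\sum_k c_k\frac{d^k}{d\lambda^k}w_i$, which holds precisely because each $w_i(\lambda)_V$ solves \eqref{eq:PF.matrix2}. Granting this, $W(0)_V^{-1}W(\lambda)_V$ satisfies the deformation equation \eqref{eq:deformation_matrixV} with value $I$ at $\lambda=0$, so by uniqueness of solutions of the linear system it coincides with $A(\lambda)_V$; here I use that $W(0)_V$ is invertible, as recorded just before the theorem. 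The same argument applied to $V^{(1)}$ yields the analogous formula for $A(\lambda)_{V^{(1)}}$.

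The step I expect to demand the most care is confirming that the series \eqref{eq:w-series} genuinely form a fundamental system of solutions of $P(V,W)$, which is what feeds the previous paragraph. For this I would run the Frobenius method at $\lambda=0$: the indicial polynomial of $P(V,W)=\prod_i(D_\lambda-k_i)-\lambda^d\prod_j(D_\lambda+\alpha_j)$ is $\prod_i(D_\lambda-k_i)$, whose roots are the distinct integers $k_0,\ldots,k_{r-1}$, and the ansatz $\lambda^{k_i}\sum_{m\ge0}a_m\lambda^{md}$ gives the recursion $a_m\,md\prod_{l\ne i}(k_i+md-k_l)=a_{m-1}\prod_j(k_i+(m-1)d+\alpha_j)$. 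Since $0\le k_i,k_l<d$ are distinct, $k_i+md-k_l$ never vanishes for $m\ge1$, so the recursion is always solvable, and dividing numerator and denominator by $d^{r}$ rewrites the coefficient ratio as that of ${}_rF_{r-1}$ with upper parameters $(\alpha_j+k_i)/d$ and lower parameters $1+(k_i-k_l)/d$ ($l\ne i$), matching \eqref{eq:w-series}. The distinctness of the leading exponents $k_i$ then gives the linear independence of $\{w_i(\lambda)_V\}_i$.

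Finally, the Frobenius relation $F(\lambda)_V=A(\lambda)_V^{-1}F(0)_VA(\lambda^p)_{V^{(1)}}$ is exactly \eqref{eq:deformation_matrix3}, which follows from the general identity \eqref{eq:deformation_matrix2} once one notes that $\Phi$ carries $\Prim^{n-2}_{\dR}(V^{(1)}\bmod W)$ into $B^\dag_K\otimes\Prim^{n-2}_{\dR}(V\bmod W)$, because $\Phi$ commutes with the $\Gamma_W/\Delta$-action; no additional computation is required at this last step.
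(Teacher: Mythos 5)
Your argument is correct and follows the same route as the paper, which proves this theorem simply by ``summing up'' the preceding discussion: the companion form of $C(\lambda)_V$, the identity $\frac{d}{d\lambda}W(\lambda)_V=W(\lambda)_VC(\lambda)_V$ with uniqueness of solutions and invertibility of $W(0)_V$, and the general relation \eqref{eq:deformation_matrix3} for the Frobenius matrix. The only place you go beyond the paper is in verifying by the Frobenius method that the series \eqref{eq:w-series} form a fundamental system, where the paper just cites \cite[16.8.6]{nist}; your computation there is correct.
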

The matrix $F(0)_V$ is the Frobenius on the Fermat variety, so 
it is more or less computable.

\medskip

The computation of the deformation matrices for the Dwork families 
was first given by Kloosterman \cite{kloosterman}.
He uses
another basis $\{\omega_{V_1}, \ldots, \omega_{V_r}\}$
of $\Prim^{n-2}_{\dR}$, where $V_1, \ldots, V_r$ are 
all elements of $(\Z/d\Z)^n_0$ which are totally nonzero, and 
he computes the deformation matrices
without using the Picard-Fuchs equations.

\begin{remark}
In Thereom \ref{4.2.theorem},
the condition 
\[
\text{``$\Prim^{n-2}_{\dR}(V\bmod W)$ is spanned by 
$\left\{\frac{d^i}{d\lambda^i}\omega_V\right\}_i$'' }
\]does {\it not} hold in general.
It is true that
$\Prim^{n-2}_{\dR}(V\bmod W)\otimes_{K[\lambda]}
K[\lambda^{-1}]$ is spanned by $\{\frac{d^i}{d\lambda^i}\omega_V\}_i$
by Theorem \ref{theorem:iso}, while one cannot
remove ``$\otimes
K[\lambda^{-1}]$'' in general.
If the condition fails, we need an additional argument to compute $A(\lambda)$.
This is illustrated in \S 4.3.
\end{remark}

\subsection{Example (\cite[\S 6 (j)]{dwork-padiccycles}, \cite[Example 5.7]{kloosterman})}
We compute the deformation matrix of the Dwork pencil of quartic K3 surfaces.
Let $p$ be an odd prime and let $\mathbb{X}/U$ be the Dwork family given by the data $(n, d, W) = (4, 4, (1, 1, 1, 1))$  over $W(\overline{\F}_p)$, i.e. the family defined by the equation
\[X_1^4 + X_2^4+ X_3^4+ X_4^4 -  4\lambda X_1X_2X_3X_4= 0.\]
In this case, $\{(1,1,1,1), (1,2,2,3), (1, 1, 3, 3)\}$ is a complete system of representatives of $(\Z/4\Z)^4_0/\langle W \rangle$.

Let $V_1 = (1, 2, 2, 3) \in (\Z/4\Z)^4_0$. By Theorem \ref{theorem:picard-fuchs} the Picard-Fuchs equation of $\omega_{V_1}$ is
\begin{align*}
  P(V_1, W) &= D_\lambda\cancel{(D_\lambda - 1)}\cancel{(D_\lambda - 2)}\cancel{(D_\lambda - 3)} - \lambda^4\cancel{(D_\lambda + 1)}(D_\lambda + 
2)\cancel{(D_\lambda + 2)}\cancel{(D_\lambda + 3)}\\
          &=D_\lambda - \lambda^4(D_\lambda + 2).
\end{align*}
Therefore ${}_1F_0\left( \begin{array}{c} \frac{1}{2}\\ -\end{array} ; \lambda^4
\right)$ is the solution of this equation with the initial condition ${}_1F_0\left( \begin{array}{c} \frac{1}{2}\\ -\end{array} ; 0
\right) =1$. Thus $A(\lambda)_{V_1} = {}_1F_0\left( \begin{array}{c} \frac{1}{2}\\ -\end{array} ; \lambda^4
\right)$ by Theorem \ref{4.2.theorem}.

\bigskip
Let $V_2 = (1, 1, 1, 1) \in (\Z/4\Z)^4_0$. The Picard-Fuchs equation of $\omega_{V_2}$ is
\begin{align*}
  P(V_2, W) &= D_\lambda(D_\lambda - 1)(D_\lambda - 2)\cancel{(D_\lambda - 3)} - \lambda^4\cancel{(D_\lambda + 1)}(D_\lambda + 1)(D_\lambda + 1)(D_\lambda + 1)\\
          &=D_\lambda(D_\lambda - 1)(D_\lambda - 2) - \lambda^4(D_\lambda + 1)^3.
\end{align*}
For this hypergeometric differential equation],
\begin{align*}
w_0(\lambda)_{V_2} = {}_3F_2\left( \begin{array}{c} \frac{1}{4} \; \frac{1}{4} \; \frac{1}{4} \\ \frac{1}{2} \; \frac{3}{4}\end{array} ; \lambda^4
\right), w_1(\lambda)_{V_2} = \lambda {}_3F_2\left( \begin{array}{c} \frac{1}{2} \; \frac{1}{2} \; \frac{1}{2} \\ \frac{3}{4} \; \frac{5}{4}\end{array} ; \lambda^4\right), w_2(\lambda)_{V_2} = 8\lambda^2 {}_3F_2\left( \begin{array}{c} \frac{3}{4} \; \frac{3}{4} \; \frac{3}{4} \\ \frac{5}{4} \; \frac{3}{2}\end{array} ; \lambda^4\right)
\end{align*}
form a fundamental set of solutions.
Then
\[W(\lambda)_{V_2} = 
    \begin{pmatrix}
      {}_3F_2\left( { \frac{1}{4} \; \frac{1}{4} \; \frac{1}{4} \atop \frac{1}{2} \; \frac{3}{4}} ; \lambda^4
      \right) & \frac{\lambda^3}{6} {}_3F_2\left( { \frac{5}{4} \; \frac{5}{4} \; \frac{5}{4} \atop \frac{3}{2} \; \frac{7}{4}} ; \lambda^4
                \right) & \frac{\lambda^2}{2} {}_3F_2\left( { \frac{5}{4} \; \frac{5}{4} \; \frac{5}{4} \atop \frac{3}{2} \; \frac{3}{4}} ; \lambda^4
                          \right)\\
      \lambda {}_3F_2\left( { \frac{1}{2} \; \frac{1}{2} \; \frac{1}{2} \atop \frac{3}{4} \; \frac{5}{4}} ; \lambda^4\right)& {}_3F_2\left( { \frac{1}{2} \; \frac{1}{2} \; \frac{1}{2} \atop \frac{3}{4} \; \frac{1}{4}} ; \lambda^4
                                                                                                                                                    \right) &\frac{8}{3}\lambda {}_3F_2\left( { \frac{3}{2} \; \frac{3}{2} \; \frac{3}{2} \atop \frac{7}{4} \; \frac{5}{4}} ; \lambda^4
                                                                                                                                                              \right)\\
      8\lambda^2 {}_3F_2\left( { \frac{3}{4} \; \frac{3}{4} \; \frac{3}{4} \atop \frac{5}{4} \; \frac{3}{2}} ; \lambda^4\right) & \lambda {}_3F_2\left( { \frac{3}{4} \; \frac{3}{4} \; \frac{3}{4} \atop \frac{5}{4} \; \frac{1}{2}} ; \lambda^4
                                                                                                                                                        \right) & {}_3F_2\left( { \frac{3}{4} \; \frac{3}{4} \; \frac{3}{4} \atop \frac{1}{4} \; \frac{1}{2}} ; \lambda^4
                                                                                                                                                                  \right)
    \end{pmatrix}
\]
satisfies the initial condition $W(0)_{V_2} = I$ and hence $A(\lambda)_{V_2} = W(\lambda)_{V_2}$
by Theorem \ref{4.2.theorem}.

\bigskip
For $V_3 = (1, 1, 3, 3)$,  the Picard-Fuchs equation of $\omega_{V_3}$ is
\begin{align*}
  P(V_3, W) &= D_\lambda\cancel{(D_\lambda - 1)}(D_\lambda - 2)\cancel{(D_\lambda - 3)} - \lambda^4\cancel{(D_\lambda + 1)}(D_\lambda + 1)\cancel{(D_\lambda + 3)}(D_\lambda + 3)\\
            &=D_\lambda(D_\lambda - 2) - \lambda^4(D_\lambda + 1)(D_\lambda + 3)\\
              &= \lambda^2(1-\lambda^4)\frac{d^2}{d\lambda^2} - (1+5\lambda)\frac{d}{d\lambda} - 3\lambda^4\frac{d}{d\lambda}.
\end{align*}
The fundamental solutions of this equation are
\[w_0(\lambda)_{V_3} = {}_2F_1\left( \begin{array}{c} \frac{1}{4} \; \frac{3}{4} \\ \frac{1}{2} \end{array} ; \lambda^4
\right), w_1(\lambda)_{V_3} = \lambda^2{}_2F_1\left( \begin{array}{c} \frac{3}{4} \; \frac{5}{4} \\ \frac{3}{2} \end{array} ; \lambda^4
\right)\]
and the Wronskian is
\[W(\lambda)_{V_3} = 
    \begin{pmatrix}
      {}_2F_1\left( {\frac{1}{4} \; \frac{3}{4} \atop \frac{1}{2} } ; \lambda^4
\right) & \frac{3}{2}\lambda^3{}_2F_1\left( {\frac{5}{4} \; \frac{7}{4} \atop \frac{3}{2} } ; \lambda^4
\right)\\\lambda^2{}_2F_1\left( {\frac{3}{4} \; \frac{5}{4} \atop \frac{3}{2} } ; \lambda^4
\right)&2\lambda\,{}_2F_1\left( {\frac{1}{4} \; \frac{3}{4} \atop \frac{1}{2} } ; \lambda^4
\right)
    \end{pmatrix}
  .\]

However, this is the case in which $\omega_{V_3}$ and $\frac{d}{d\lambda}\omega_{V_3}$ do not form a basis, so we cannot apply Theorem \ref{4.2.theorem} directly.

The eigenspace
$\Prim^{n-2}_{\dR}(V_3 \bmod W)$ is characterized as Deligne's canonical extension 
of $\Prim^{n-2}_{\dR}(V_3 \bmod W)|_{U_K - \{0\}}$ (\cite[II. \S5]{deligne}).
In this case, 
it is the coherent $\mathcal{O}_{U_K}$-submodule of $\Prim^{n-2}_{\dR}(V_3 \bmod W)|_{U_K - \{0\}}$ 
stable under $D_\lambda$ such that  
$(D_\lambda\bmod\lambda)$ is nilpotent.
We can compute it from the Picard-Fuchs equation $P(V_3, W)$.
Put $e_1 = \omega_{V_3}$ and $e_2 =\frac{1}{2\lambda}\frac{d}{d\lambda}\omega_{V_3}$,
then we have that
\begin{align*}
  D_\lambda (e_1) &=  2\lambda^2\, e_2\\
D_\lambda(e_2) &= \frac{3\lambda^2}{2(1-\lambda^4)}e_1 + \frac{6\lambda^4}{1-\lambda^4}e_2,
\end{align*}
and hence
\[
\mathcal{O}_{U_K}e_1\oplus \mathcal{O}_{U_K}e_2= \Prim^{n-2}_{\dR}(V_3 \bmod W).
\]
Since the change-of-basis matrix from $\{\omega_{V_3}, \frac{d}{d\lambda}\omega_{V_3}\}$ to $\{\omega_{V_3}, \frac{1}{2\lambda}\frac{d}{d\lambda}\omega_{V_3}\}$ is
\[B(\lambda) =
  \begin{pmatrix}
    1 & 0\\0& \frac{1}{2\lambda}
  \end{pmatrix}
\]
on $U_K - \{0\}$, the matrix
\begin{equation}
  \label{eq:result}
  W(\lambda)_{V_3}\,B(\lambda) = 
    \begin{pmatrix}
      {}_2F_1\left( {\frac{1}{4} \; \frac{3}{4} \atop \frac{1}{2} } ; \lambda^4
\right) & \frac{3}{4}\lambda^2{}_2F_1\left({\frac{5}{4} \; \frac{7}{4} \atop 
\frac{3}{2} } ; \lambda^4\right)\\
\lambda^2{}_2F_1\left( { \frac{3}{4} \; \frac{5}{4} \atop \frac{3}{2} } ; \lambda^4
\right)&{}_2F_1\left({\frac{1}{4} \; \frac{3}{4} \atop \frac{1}{2} } ; \lambda^4
\right)
    \end{pmatrix}
\end{equation}
is the solution of (\ref{eq:deformation_matrix}) for the basis $\{\omega_{V_3}, \frac{1}{2\lambda}\frac{d}{d\lambda}\omega_{V_3}\}$ by Proposition \ref{matrixlemma}.
Obviously $W(\lambda)_{V_3}\,B(\lambda)|_{\lambda = 0} = I$, that concludes
$A(\lambda)_{V_3} = W(\lambda)_{V_3}\,B(\lambda)$.


\end{document}